\theoremstyle{plain}
\newtheorem{theorem}{Theorem}[section]
\newtheorem{corollary}[theorem]{Corollary}
\newtheorem{lemma}[theorem]{Lemma}
\newtheorem{proposition}[theorem]{Proposition}
\theoremstyle{definition}
\newtheorem{remark}[theorem]{Remark}
\theoremstyle{remark}
\numberwithin{theorem}{section}
\numberwithin{equation}{section}
\newcommand{\Z}{\mathbb{Z}}
\newcommand{\R}{\mathbb{R}}
\newcommand{\dist}{\mathrm{dist}}
\newcommand{\diam}{\mathrm{diam}}
\newcommand{\cl}{\overline}
\newcommand{\loc}{\mathrm{loc}}
\DeclareMathOperator*{\divergence}{div}
\newcommand{\laplacian}{\Delta}
\DeclareMathOperator*{\osc}{osc}
\DeclareMathOperator*{\esssup}{ess\,sup}
\DeclareMathOperator*{\essinf}{ess\,inf}
\newcommand{\capacity}{\mathrm{cap}}
\newcommand{\wkto}{\rightharpoonup}
\begin{document}


\title{Strong barriers for weighted quasilinear equations}
\author{Takanobu Hara}
\email{takanobu.hara.math@gmail.com}
\address{Graduate School of Information Science and Technology, Kita 14, Nishi 9, Kita-ku, Sapporo, Hokkaido, 060-0814, Japan}
\date{\today}
\subjclass[2020]{35J92; 35J25; 31C15; 31C45} 
\keywords{potential theory, Hardy inequality, $p$-Laplacian, quasilinear elliptic equation, boundary value problem, boundary regularity}


\begin{abstract}
In potential theory, use of barriers is one of the most important techniques.
We construct strong barriers for weighted quasilinear elliptic operators.
There are two applications: (i) solvability of Poisson-type equations with boundary singular data,
and (ii) a geometric version of Hardy inequality.
Our construction method can be applied to a general class of divergence form elliptic operators on domains with rough boundary.
\end{abstract}



\maketitle


\section{Introduction}\label{sec:introduction}

Let $\Omega \subsetneq \R^{n}$ ($n \ge 1$) be an open set with nonempty boundary, and let $1 < p < \infty$.
We consider elliptic differential equations of the type
\begin{equation}\label{eqn:p-laplace}
\begin{cases}
- \divergence \mathcal{A}(x, \nabla u(x)) = f(x) & \text{in} \ \Omega, \\
u = 0 & \text{on} \ \partial \Omega,
\end{cases}
\end{equation}
where 
$\divergence \mathcal{A}(x, \nabla \cdot)$ is a weighted $(p, w)$-Laplacian type elliptic operator,
$w$ is a doubling weight on $\R^{n}$ which admit a $p$-Poincar\'{e} inequality (see \eqref{eqn:VD} and \eqref{eqn:PI} for detail),
and $f$ is a locally bounded function on $\Omega$.
The most simple example of $w$ is $w \equiv 1$, and the reason for considering weighted equations will be explained later.
The precise assumptions on $\mathcal{A} \colon \Omega \times \R^{n} \to \R^{n}$ are as follows:
For each $z \in \R^{n}$, $\mathcal{A}(\cdot, z)$ is measurable, for each $x \in \Omega$, $\mathcal{A}(x, \cdot)$ is continuous,
and there exists $1 \le L < \infty$ such that
\begin{align}
\mathcal{A}(x, z) \cdot z \ge w(x) |z|^{p}, \label{eqn:coercive}
\\ 
|\mathcal{A}(x, z)| \le L w(x) |z|^{p - 1},  \label{eqn:growth}
\\
\left( \mathcal{A}(x, z_{1}) - \mathcal{A}(x, z_{2}) \right) \cdot (z_{1} - z_{2}) > 0, \label{eqn:monotonicity}
\\ 
\mathcal{A}(x, t z) = t |t|^{p - 2} \mathcal{A}(x, z) \label{eqn:homogenity}
\end{align}
for all $x \in \Omega$, $z, z_{1}, z_{2} \in \R^{n}$, $z_{1} \not = z_{2}$ and $t \in \R$.
When $\mathcal{A}(x, z) = w(x)|z|^{p - 2} z$, the operator $\divergence \mathcal{A}(x, \nabla u)$ is called as the $(p, w)$-Laplacian.
In particular, if $\mathcal{A}(x, z) = z$, then the operator coincides with the classical Laplacian.

The aim of this paper is to provide an existence result of weak solutions to \eqref{eqn:p-laplace} for boundary singular data.
The study of Eq. \eqref{eqn:p-laplace} has long history,
and the standard approach to solve this problem is the variational method and its generalization (see, \cite{MR0259693}).
However, this method yields only solutions with finite energy.
On the other hand, we can confirm existence of infinite energy solutions for boundary singular data
by considering Poisson's equation (in the classical sense) or ordinary differential equations.
To obtain such solutions, we divide the problem into three steps:
\begin{enumerate*}[label=(\roman*)]
\item\label{item:first_step}
find finite energy solutions to approximate problems,
\item\label{item:second_step}
prove compactness of solutions, and
\item\label{item:third_step}
derive a uniform bound for solutions.
\end{enumerate*}
The part \ref{item:first_step} is trivial in our setting,
and we can find sufficient results for \ref{item:second_step} in prior work
(e.g., \cite{MR1183665, MR2305115, MR1890997}).
This sequel from \cite{MR4309811, MR4493271} proposes a new perspective for \ref{item:third_step}.
More precisely, we construct supersolutions called strong barriers.

Supersolutions are an effective tool to control boundary behavior of solutions.
For concrete problems, direct calculations for $\delta$ often yield sharp estimates 
(see, e.g., \cite{MR700735, MR1817710, MR2286361, MR2286038, MR2297247}).
Unfortunately, this method can not be applied to general elliptic equations on domains with rough boundary.
When conditions of the form \eqref{eqn:coercive}-\eqref{eqn:homogenity} have to be considered,
such as in applications to homogenization problems (e.g., \cite{MR503330}),
it is needed to consider a construction method of supersolutions itself. 

Ancona \cite{MR856511} defined strong barriers for linear elliptic operators 
and constructed them under capacity density type conditions.
In addition, a Hardy inequality was proved as one application of them.
Since other proofs by Lewis \cite{MR946438} and Wannebo \cite{MR1010807}, 
many authors have proved more general Hardy-type inequalities under capacity density conditions (see, \S \ref{sec:setting}).
However, another application of strong barriers, to the Dirichlet problem of the type \eqref{eqn:p-laplace},
seems not to be discussed sufficiently.

We construct strong barriers for quasilinear operators (Theorem \ref{thm:main})
and apply the result to \eqref{eqn:p-laplace} (Theorem \ref{thm:Dirichlet}).
Specifically, we make auxiliary functions by a boundary H\"{o}lder estimate in the De Giorgi-Nash-Moser theory
and  construct a global function by gluing them.
These results can be regarded as extensions of \cite[Remark 6.2]{MR856511} or \cite[Corollary 4.4]{MR4493271}
as well as analogs of known facts for the $p$-Laplacian on $C^{2}$ domains
(see, \cite[Problem 6.6]{MR1814364}, \cite[Theorem 1]{MR1817710}).
In addition, we prove a Hardy-type inequality (Corollary \ref{cor:hardy})
by combining the results and the Picone inequality (see \cite{MR1618334}).
For connection with prior work on Hardy-type inequalities, we consider weighted operators
borrowing a framework in \cite{MR2305115} (see also \cite{MR2867756, MR4306765} for recent progress).
Known results for Hardy-type inequalities that seem to be particularly relevant to this study will be discussed in \S \ref{sec:setting}.
Throughout the paper, we assume only \eqref{eqn:coercive}-\eqref{eqn:homogenity}, \eqref{eqn:VD}-\eqref{eqn:PI}
and $(p, w)$-capacity density conditions.
The quantitative statements in results are new even for unweighted linear equations (compare with \cite[Remark 5.2]{MR856511}).
Our method seems to work for Cheeger differential equations on metric measure spaces
(see, \cite{MR1708448, MR1869615}, \cite[Chapter B.2]{MR2867756}, \cite{MR3842214}).
On the other hand, there are large gaps in its direct application to the minimizers of variational problems.
Another problem is that assumption in the results is clearly not optimal for interior regularity of solutions.
These problems are left for future work.

\subsection*{Organization of the paper}
In \S \ref{sec:setting}, we confirm of our problem and pick up related results on weighted Sobolev spaces.
In \S \ref{sec:op}, we define weak solutions to \eqref{eqn:p-laplace} and prove a Kato-type inequality.
In \S \ref{sec:reg}, we states regularity estimates that will be used in \S \ref{sec:barriers}.
In \S \ref{sec:barriers}, we construct strong barriers for \eqref{eqn:p-laplace}.
In \S \ref{sec:Dirichlet}, we apply the result in \S \ref{sec:barriers} to \eqref{eqn:p-laplace} and achieve our goals.

\subsection*{Notation}
Let $\Omega \subsetneq \R^{n}$ be an open set.
\begin{itemize}
\item
$\mathbf{1}_{E}(x) :=$ the indicator function of a set $E$.
\item
$C_{c}^{\infty}(\Omega) :=$
the set of all infinitely-differentiable functions with compact support in $\Omega$.
\item
$L^{p}(\Omega; \mu) :=$ the $L^{p}$ space with respect to a measure $\mu$ on $\Omega$.
\item
$f_{+} := \max\{f, 0\}$ and $f_{-} := - \min\{ f, 0 \}$.
\end{itemize}
For a closed set $\Gamma \subset \R^{n}$, we denote by $\delta_{\Gamma}$ the distance from $\Gamma$.
If $\Gamma = \R^{n} \setminus \Omega$, we write $\delta_{\R^{n} \setminus \Omega}$ as $\delta$ simply.
For a ball $B = B(x, R) = \{ y \colon \dist(x, y) < R \}$ and $\lambda > 0$,
we denote $B(x, \lambda R)$ by $\lambda B$.
The letters $c$ and $C$ denote various constants with and without indices.

\section{Setting and related work}\label{sec:setting}

\subsection{Admissible weights}
Let $1 < p < \infty$ be a fixed number.
A function $w \in L^{1}_{\loc}(\R^{n}; dx)$ ia called the \textit{weight} if $w(x) > 0$ a.e. in $\R^{n}$.
We write $w(E) = \int_{E} w \, dx$ for a Lebesgue measurable set $E \subset \R^{n}$.
Throughout the below, we assume that $w$ satisfying the doubling condition 
\begin{equation}\label{eqn:VD}
w(2B) \le C_{D} w(B)
\end{equation}
and the $p$-Poincar\'{e} inequality
\begin{equation}\label{eqn:PI}
\fint_{B} |u - u_{B}| \, d w \le C_{P} \, \diam(B) \left( \fint_{\lambda B} |\nabla u|^{p} \, d w \right)^{1 / p},
\quad
\forall u \in C_{c}^{\infty}(\R^{n}),
\end{equation}
where $B$ is an arbitrary ball in $\R^{n}$,
$\fint_{B} := w(B)^{-1} \int_{B}$, $u_{B} := \fint_{B} u \, d w$
and $C_{D}$, $C_{P}$ and $\lambda \ge 1$ are constants.
A weight $w$ satisfying \eqref{eqn:VD} and \eqref{eqn:PI} said to be \textit{$p$-admissible}.

It is well-known that \eqref{eqn:VD} and \eqref{eqn:PI} yield the following Sobolev type inequality:
\begin{equation}\label{eqn:sobolev}
\left( \fint_{B} |u|^{\chi p} \, d w \right)^{1 / \chi p}
\le
C \diam(B)
\left( \fint_{B} |\nabla u|^{p} \, d w \right)^{1 / p},
\quad \forall u \in C_{c}^{\infty}(B).
\end{equation}
where $C$ and $\chi > 1$ are constants depending only on $p$, $C_{D}$, $C_{P}$ and $\lambda$.
For detail, we refer to \cite[Chapter 20]{MR2305115} and the references cited therein.

Muckenhoupt $A_{p}$-weights are one typical example of $p$-admissible weights (\cite[Chapter 15]{MR2305115}).
The power function $|x|^{\mu}$ is an $A_{p}$-weight on $\R^{n}$ if and only if $-n < \mu < n(p - 1)$.
It seems to be known conventionally that if $\Omega$ is a bounded Lipschitz domain,
then $w(x) = \delta(x)^{\mu}$ is a $p$-admissible weight on $\R^{n}$ for $-1 < \mu < p - 1$. 
Finer results can be found in \cite{MR1021144, MR1118940, MR2606245, MR3900847} and \cite[Chapter 10]{MR4306765}.
Roughly speaking, if $\Gamma$ is an $s$-dimensional set with $0 < s < n$,
then $w(x) = \delta_{\Gamma}(x)^{\mu}$ is an $A_{p}$-weight on $\R^{n}$ for $-(n - s) < \mu < (n - s)(p - 1)$.

\subsection{Sobolev spaces and capacities}

The weighted Sobolev space $H^{1, p}(\Omega; w)$ is the closure of $C^{\infty}(\Omega)$
with respect to the norm
\[
\| u \|_{H^{1, p}(\Omega; w)}
:=
\left(
\int_{\Omega} |u|^{p} + |\nabla u|^{p} \, d w
\right)^{1 / p},
\]
where $\nabla u$ is the gradient of $u$ in the sense of $\R^{n}$. 
The corresponding local space $H^{1, p}_{\loc}(\Omega; w)$ is defined in the usual manner.
We denote by $H_{0}^{1, p}(\Omega; w)$ the closure of $C_{c}^{\infty}(\Omega)$ in $H^{1, p}(\Omega; w)$.
It is well-known that if $u, v \in H^{1, p}_{\loc}(\Omega; w)$, then $\min\{ u, v \} \in H^{1, p}_{\loc}(\Omega; w)$.

Let $O \subset \R^{n}$ be open, and let $K \subset O$ be compact.
The \textit{$(p, w)$-capacity} $\capacity_{p, w}(K, O)$
of the \textit{condenser} $(K, O)$ is defined by
\begin{equation}
\capacity_{p, w}(K, O)
:=
\inf \left\{
\| \nabla u \|_{L^{p}(O; w)}^{p} \colon u \geq 1 \ \text{on} \ K, \ u \in C_{c}^{\infty}(O)
\right\}.
\end{equation}

For a boundary point $\xi \in \partial \Omega$ (more generally, for $\xi \in \R^{n} \setminus \Omega$),
we consider the following \textit{$(p, w)$-capacity density condition}:
There exists $\gamma > 0$ such that
\begin{equation}\label{eqn:CDC}
\frac{ \capacity_{p, w}( \cl{B(\xi, R)} \setminus \Omega, B(\xi, 2R)) }{ \capacity_{p, w}( \cl{B(\xi, R)}, B(\xi, 2R)) } \ge \gamma.
\end{equation}
If \eqref{eqn:CDC} holds for all small $R > 0$, then $\xi$ is a regular point of the corresponding Dirichlet problem
(see Lemma \ref{lem:boundary_hoelder_esti} below, \cite[Theorem 6.31]{MR2305115} and the references cited therein).
Sufficient conditions for \eqref{eqn:CDC} via exterior corkscrew-type conditions
can be found in \cite[Theorem 6.31]{MR2305115} and \cite[Corollary 11.25]{MR2867756}.
In particular, every boundary point of a ball satisfies \eqref{eqn:CDC} for all $R > 0$.

When \eqref{eqn:CDC} holds for all $\xi \in \R^{n} \setminus \Omega$ and $R > 0$,
the set $\R^{n} \setminus \Omega$ is called as \textit{uniformly $(p, w)$-fat}.
This property is closely relates to Hardy-type inequalities.
The study of Hardy-type inequalities under capacity density conditions started with Ancona's work \cite{MR612540, MR856511}.
Lewis \cite{MR946438} developed theory of self-improvement property of uniformly $(p, 1)$-fat sets
and proved $p$-Hardy inequalities as a consequence.
Wannebo \cite{MR1010807} proved higher order Hardy-type inequalities using another approach.
Mikkonen \cite{MR1386213} studied nonlinear potential theory with $p$-admissible weights
and proved the self-improvement property of $(p, w)$-fat sets using a new method.
Bj\"{o}rn, MacManus and Shanmugalingam \cite{MR1869615} extended the results to the metric measure space setting.
The equivalence between pointwise Hardy inequalities and uniformly fatness was established by
several authors' contributions (see, \cite{MR1458875, MR1470421, MR2383525, MR2854110}).
We refer to \cite[Chapter 6]{MR4306765} and \cite[Chapter 3]{MR3408787} for further related work.
However, Ancona's proof in \cite{MR856511} is not very similar to any of the others.

The study of cases where \eqref{eqn:CDC} holds on a part of the boundary is more limited. 
The most similar known result to Corollary \ref{cor:hardy} below is \cite[Proposition 5.4]{MR3673660}.
They are using Wannebo's idea. 
In \cite{MR3361789}, another form of Hardy inequality and its applications are discussed
under assumptions on Sobolev extensibility of domains.

\section{Quasilinear elliptic operator}\label{sec:op}

Assume that $\mathcal{A} \colon \Omega \times \R^{n} \to \R^{n}$ satisfies \eqref{eqn:coercive}-\eqref{eqn:homogenity}.
For simplicity, we denote the following extended function $\overline{\mathcal{A}}$ by $\mathcal{A}$ again:
\[
\overline{  \mathcal{A} }(x, z)
=
\begin{cases}
\mathcal{A}(x, z) & x \in \Omega, \\ 
w(x) |z|^{p - 2} z & \text{otherwise},
\end{cases}
\quad
z \in \R^{n}.
\]
Let $f \in L^{1}_{\loc}(\Omega)$.
A function $u \in H^{1, p}_{\loc}(\Omega; w)$ is called weak (super-, sub-)solution to $- \divergence \mathcal{A}(x, \nabla u) = f$ in $\Omega$ if
\begin{equation}\label{eqn:weak-form}
\int_{\Omega} \mathcal{A}(x , \nabla u) \cdot \nabla \varphi \, dx = (\ge, \, \le) \int_{\Omega} \varphi f \,d x
\end{equation}
for any nonnegative $\varphi \in C_{c}^{\infty}(\Omega)$.

If $u$ is a weak supersolution to $- \divergence \mathcal{A}(x, \nabla u) = 0$ in $\Omega$,
then, by the Riesz representation theorem, there is a unique Radon measure $\nu[u]$ in $\Omega$ such that
\[
\int_{\Omega} \mathcal{A}(x , \nabla u) \cdot \nabla \varphi \, dx = \int_{\Omega} \varphi \, d \nu[u]
\]
for any $\varphi \in C_{c}^{\infty}(\Omega)$.
The measure $\nu[u]$ is called the \textit{Riesz measure} of $u$.
We refer to \cite[Chapter 21]{MR2305115} for further detail.

By \eqref{eqn:coercive}, if $u$ is a solution to $- \divergence \mathcal{A}(x, \nabla u) = f$ in $\Omega$,
then its truncation $\min\{ u, k \}$ is a supersolution to $- \divergence \mathcal{A}(x, \nabla u) = \min\{f, 0\}$ in $\Omega$.
The monotonicity condition \eqref{eqn:monotonicity} yields the following more general result.

\begin{lemma}\label{lem:barayage}\label{lem:glueing}
Assume that $u$ and $v$ are weak supersolutions to $- \divergence \mathcal{A}(x, \nabla u) = 0$ in $\Omega$.
Then,
\begin{equation}\label{eqn:glueing}
\int_{\Omega} \varphi \, d \nu[ \min\{ u, v \} ]
\ge
\int_{\Omega} \varphi \mathbf{1}_{ \{ u \le v \} } \, d \nu[ u ]
+
\int_{\Omega} \varphi \mathbf{1}_{ \{ u > v \} } \, d \nu[ v ]
\end{equation}
for any nonnegative $\varphi \in C_{c}^{\infty}(\Omega)$.
Moreover, if $u_{1}, \cdots, u_{k}$ are weak supersolutions to $- \divergence \mathcal{A}(x, \nabla u) = 0$ in $\Omega$,
and if there is a Radon measure $\sigma$ such that $\nu[u_{k}] \ll \sigma$ for all $k$, then
\begin{equation}\label{eqn:glueing2}
\nu[ \min\{ u_{1}, \cdots, u_{k} \} ] \ge \min\{ f_{1}, \cdots, f_{k} \} \sigma \quad \text{in} \ \Omega
\end{equation}
in the sense of distributions, where $f_{k}$ is the Radon-Nikod\'{y}m derivative of $\nu[u_{k}]$ with respect to $\sigma$.
\end{lemma}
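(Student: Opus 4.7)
The plan is to use the monotonicity condition \eqref{eqn:monotonicity} to convert the two supersolution inequalities for $u$ and $v$ into a single inequality for $\min\{u,v\}$, keeping quantitative control via the Riesz measures. The key algebraic observation is the chain rule $\nabla \min\{u,v\} = \mathbf{1}_{\{u\le v\}}\nabla u + \mathbf{1}_{\{u>v\}}\nabla v$ a.e., which follows from $\min\{u,v\}=u-(u-v)_+$ together with $\nabla u=\nabla v$ a.e.\ on $\{u=v\}$. Consequently, for $\varphi \in C_c^\infty(\Omega)$,
\[
\int_\Omega \mathcal{A}(x,\nabla \min\{u,v\})\cdot \nabla\varphi\, dx = \int_\Omega \mathbf{1}_{\{u\le v\}}\mathcal{A}(x,\nabla u)\cdot\nabla\varphi\, dx + \int_\Omega \mathbf{1}_{\{u>v\}}\mathcal{A}(x,\nabla v)\cdot\nabla\varphi\, dx,
\]
and once this right-hand side is seen to be nonnegative for $\varphi \ge 0$, $\min\{u,v\}$ is a supersolution and the left-hand side coincides with $\int \varphi\, d\nu[\min\{u,v\}]$.

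To estimate the right-hand side from below, I would introduce a Lipschitz cutoff $\zeta_\varepsilon \colon \R \to [0,1]$ with $\zeta_\varepsilon(t)=1$ for $t\ge 0$, $\zeta_\varepsilon(t)=0$ for $t\le -\varepsilon$, and $\zeta_\varepsilon'\ge 0$. Testing the supersolution inequality for $u$ against $\zeta_\varepsilon(v-u)\varphi$ and that for $v$ against $(1-\zeta_\varepsilon(v-u))\varphi$, expanding gradients and summing yields
\[
\int \zeta_\varepsilon \mathcal{A}(x,\nabla u)\cdot\nabla\varphi\, dx + \int (1-\zeta_\varepsilon) \mathcal{A}(x,\nabla v)\cdot\nabla\varphi\, dx + \int \varphi\, \zeta_\varepsilon'(v-u)\bigl(\mathcal{A}(x,\nabla u) - \mathcal{A}(x,\nabla v)\bigr)\cdot(\nabla v-\nabla u)\, dx \ge \int \zeta_\varepsilon \varphi\, d\nu[u] + \int (1-\zeta_\varepsilon) \varphi\, d\nu[v].
\]
The monotonicity \eqref{eqn:monotonicity} forces the third integrand on the left to be nonpositive, so dropping it preserves the inequality. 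Letting $\varepsilon\to 0$ via dominated convergence (using \eqref{eqn:growth} and the local integrability of $|\nabla u|^p,|\nabla v|^p$ against $w$ to produce dominating functions), $\zeta_\varepsilon(v-u)\to \mathbf{1}_{\{u\le v\}}$ and $1-\zeta_\varepsilon(v-u)\to \mathbf{1}_{\{u>v\}}$ pointwise a.e.; combining with the identity in the previous paragraph delivers \eqref{eqn:glueing}.

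For the strengthened conclusion \eqref{eqn:glueing2} I would induct on $k$. Under $\nu[u_i]=f_i\sigma$, the case $k=2$ reads $\nu[\min\{u_1,u_2\}] \ge \mathbf{1}_{\{u_1\le u_2\}} f_1\sigma + \mathbf{1}_{\{u_1>u_2\}} f_2 \sigma \ge \min\{f_1,f_2\}\sigma$. For the inductive step, write $\min\{u_1,\dots,u_k\} = \min\{m_{k-1},u_k\}$ with $m_{k-1}:=\min\{u_1,\dots,u_{k-1}\}$, apply \eqref{eqn:glueing} to $m_{k-1}$ and $u_k$, invoke the inductive hypothesis $\nu[m_{k-1}]\ge \min_{i<k}f_i\cdot \sigma$, and note that on both $\{m_{k-1}\le u_k\}$ and $\{m_{k-1}>u_k\}$ the resulting density dominates $\min_{i\le k}f_i$.

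The main technical hurdle is the passage to the limit $\varepsilon\to 0$, in particular verifying the admissibility of $\zeta_\varepsilon(v-u)\varphi$ and $(1-\zeta_\varepsilon(v-u))\varphi$ as test functions in $H^{1,p}(\Omega;w)$ with compact support in $\Omega$, and controlling the integrands uniformly in $\varepsilon$; the growth bound \eqref{eqn:growth} combined with Young's inequality does both. A subtle but essential point is the shift placing $\zeta_\varepsilon(0)=1$, without which the limit would produce the indicator of $\{u<v\}$ instead of $\{u\le v\}$ and the inequality \eqref{eqn:glueing} could fail on $\{u=v\}$ whenever $\nu[u]$ and $\nu[v]$ charge that set differently.
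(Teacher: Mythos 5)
Your proof is correct and follows essentially the same strategy as the paper's: approximate the indicator $\mathbf{1}_{\{u\le v\}}$ by a Lipschitz cutoff applied to $u-v$, use the pointwise decomposition $\nabla\min\{u,v\}=\mathbf{1}_{\{u\le v\}}\nabla u+\mathbf{1}_{\{u>v\}}\nabla v$, invoke monotonicity \eqref{eqn:monotonicity} to sign-control the term carrying the derivative of the cutoff, and pass to the limit by dominated convergence. The paper uses the rational cutoffs $\Phi_\epsilon(t)=\epsilon/(t_+ +\epsilon)$, $\Psi_\epsilon=1-\Phi_\epsilon$ and organizes the computation by decomposing $\int\varphi\,d\nu[\min\{u,v\}]$ at the outset rather than summing the Riesz identities for $u$ and $v$ and invoking the chain rule at the end, but these are cosmetic rather than structural differences.

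One small inaccuracy in your commentary: if one instead chose a cutoff with $\zeta_\epsilon(0)=0$, so that $\zeta_\epsilon(v-u)\to\mathbf{1}_{\{u<v\}}$, the argument would still go through (the friction term retains the correct sign) and would deliver the equally valid inequality with $\mathbf{1}_{\{u<v\}}\,d\nu[u]+\mathbf{1}_{\{u\ge v\}}\,d\nu[v]$ on the right — which in fact also follows from \eqref{eqn:glueing} by swapping $u$ and $v$. So the shift $\zeta_\epsilon(0)=1$ is what selects the particular form stated in \eqref{eqn:glueing}, but the alternative choice would not make the inequality ``fail on $\{u=v\}$.''
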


\begin{proof}
Note that $u, v \in H^{1, p}_{\loc}(\Omega; w)$.
For $\epsilon > 0$, consider the functions
\[
\Phi_{\epsilon}(u - v) := \frac{\epsilon}{ (u - v)_{+} + \epsilon }
\quad \text{and} \quad
\Psi_{\epsilon}(u - v) := 1 - \Phi(u - v).
\]
These functions are globally Lipschitz continuous with respect to $u - v$;
therefore, $\Phi_{\epsilon}(u - v), \Psi_{\epsilon}(u - v) \in H^{1, p}_{\loc}(\Omega; w)$.
We also note that $\Phi_{\epsilon}(u - v)(x) \to \mathbf{1}_{ \{ u \le v \} }(x)$ for all $x$.
Fix $\varphi \in C_{c}^{\infty}(\Omega)$. Then,
\[
\begin{split}
&
\int_{\Omega} \varphi \, d \nu[ \min\{ u, v \} ] \\
& \quad
=
\int_{\Omega} \varphi \Phi_{\epsilon}(u - v) \, d \nu[ \min\{ u, v \} ]
+
\int_{\Omega} \varphi \Psi_{\epsilon}(u - v) \, d \nu[ \min\{ u, v \} ].
\end{split}
\]
By the definition of $\nu[ \min\{ u, v \} ]$, we have
\[
\begin{split}
\int_{\Omega} \varphi \Phi_{\epsilon}(u - v) \, d \nu[ \min\{ u, v \} ]
& =
\int_{ [u \le v] } \mathcal{A}(x, \nabla u) \cdot \nabla ( \varphi \Phi_{\epsilon}(u - v) ) \, dx \\
& \quad +
\int_{ [u > v] } \mathcal{A}(x, \nabla v) \cdot \nabla ( \varphi \Phi_{\epsilon}(u - v) ) \, dx.
\end{split}
\]
Note that
$\left( \mathcal{A}(x, \nabla v) - \mathcal{A}(x, \nabla u) \right) \cdot \nabla \Phi_{\epsilon}(u - v) \ge 0$ by \eqref{eqn:monotonicity}.
Therefore,
\[
\int_{ [u > v] } \mathcal{A}(x, \nabla v) \cdot \nabla ( \varphi \Phi_{\epsilon}(u - v) ) \, dx + I_{\epsilon}
\ge
\int_{ [u > v] } \mathcal{A}(x, \nabla u) \cdot \nabla ( \varphi \Phi_{\epsilon}(u - v) ) \, dx,
\]
where
\[
I_{\epsilon}
:=
\int_{ [u > v] } \left( \mathcal{A}(x, \nabla u) - \mathcal{A}(x, \nabla v) \right) \cdot \nabla \varphi \, \Phi_{\epsilon}(u - v) \, dx.
\]
Adding the two inequalities, we get
\begin{equation}\label{eqn:glue_01}
\int_{\Omega} \varphi \Phi_{\epsilon}(u - v) \, d \nu[ \min\{ u, v \} ] + I_{\epsilon}
\ge
\int_{\Omega} \varphi \Phi_{\epsilon}(u - v) \, d \nu[ u ].
\end{equation}
Similarly, since
\[
\begin{split}
\int_{\Omega} \varphi \Psi_{\epsilon}(u - v) \, d \nu[ \min\{ u, v \} ]
& =
\int_{ [u \le v] } \mathcal{A}(x, \nabla u) \cdot \nabla ( \varphi \Psi_{\epsilon}(u - v) ) \, dx \\
& \quad +
\int_{ [u > v] } \mathcal{A}(x, \nabla v) \cdot \nabla ( \varphi \Psi_{\epsilon}(u - v) ) \, dx
\end{split}
\]
and
$\left( \mathcal{A}(x, \nabla u) - \mathcal{A}(x, \nabla v) \right) \cdot \nabla \Psi_{\epsilon}(u - v) \ge 0$,
we have
\begin{equation}\label{eqn:glue_02}
\int_{\Omega} \varphi \Psi_{\epsilon}(u - v) \, d \nu[ \min\{ u, v \} ]
+
II_{\epsilon}
\ge
\int_{\Omega} \varphi \Psi_{\epsilon}(u - v) \, d \nu[ v ],
\end{equation}
where
\[
II_{\epsilon}
:=
\int_{ [u \le v] } \left( \mathcal{A}(x, \nabla v) - \mathcal{A}(x, \nabla u) \right) \cdot \nabla \varphi \, \Psi_{\epsilon}(u - v) \, dx.
\]
Combining \eqref{eqn:glue_01} and \eqref{eqn:glue_02}, we obtain
\[
\int_{\Omega} \varphi \, d \nu[ \min\{ u, v \} ]
+
I_{\epsilon} + II_{\epsilon}
\ge
\int_{\Omega} \varphi \Phi_{\epsilon}(u - v) \, d \nu[ u ]
+
\int_{\Omega} \varphi \Psi_{\epsilon}(u - v) \, d \nu[ v ].
\]
Take the limit $\epsilon \to 0$.
By the dominated convergence theorem, the right-hand side of this inequality goes to the right-hand of \eqref{eqn:glueing}.
By the same reason, $I_{\epsilon}, II_{\epsilon} \to 0$.
Therefore, the former statement holds.
The latter statement is a consequence of induction.
\end{proof}

\section{Regularity estimates}\label{sec:reg}

By standard techniques in the De Giorgi-Nash-Moser theory,
the Sobolev inequality \eqref{eqn:sobolev} yields the following global $L^{\infty}$ estimate and weak Harnack inequality.
The proofs are combinations of \cite[p.63, Lemma B.2]{MR567696}, \cite[Theorem 8.18]{MR1814364}
and \cite[Theorems 3.59 and 7.46]{MR2305115}. See also \cite[Chapter 3.1.0]{MR757718}.

\begin{lemma}\label{lem:global_boundedness}
Let $u \in H^{1, p}(\Omega; w)$ be a weak subsolution to
$- \divergence \mathcal{A}(x, \nabla u) = f$ in $\Omega$.
Let $F =  \left( \diam( \Omega )^{p} \| f_{+} / w\|_{L^{\infty}(\Omega)} \right)^{1 / (p - 1)}$.
Then,
\[
\esssup_{\Omega} u
\le
\sup_{\partial \Omega} u + C F_{+},
\]
where $\sup_{\partial \Omega} u := \inf \{ k \in \R^{n} \colon (u - k)_{+} \in H_{0}^{1, p}(\Omega; w) \}$
and $C$ is a constant depending only on $p$, $C_{D}$ and $\{ C_{P}, \lambda \}$.
In particular, there is a constant $c_{1} = c_{1}(p, C_{D}, \{ C_{P}, \lambda \})$ such that
if $\| f_{+} / w \|_{L^{\infty}(\Omega)} \le c_{1} \diam(\Omega)^{p}$, then
$\esssup_{\Omega} u \le \sup_{\partial \Omega} u + 1 / 4$.
\end{lemma}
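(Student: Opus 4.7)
Plan: I would prove this by a weighted De Giorgi–Stampacchia level-set iteration. Set $k_0 := \sup_{\partial \Omega} u$; we may assume $k_0 < \infty$. By definition of $\sup_{\partial \Omega}$, the truncation $(u-k)_+ \in H_0^{1,p}(\Omega; w)$ for every $k \ge k_0$, so it is an admissible test function in the weak subsolution inequality \eqref{eqn:weak-form} (after the usual density argument replacing $C_c^\infty$ by $H_0^{1,p}$). Inserting $\varphi = (u-k)_+$ and applying the coercivity \eqref{eqn:coercive} gives the Caccioppoli-type bound
\[
\int_\Omega w \, \bigl|\nabla (u-k)_+ \bigr|^p \, dx \le \|f_+/w\|_{L^\infty(\Omega)} \int_{A(k)} (u-k)_+ \, dw,
\]
where $A(k) := \{ u > k \} \cap \Omega$.

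Next I would enclose $\Omega$ in a ball $B$ with $\diam(B) \le 2 \diam(\Omega)$, extend $(u-k)_+$ by zero, and combine the Sobolev inequality \eqref{eqn:sobolev} on $B$ with Hölder's inequality on $A(k)$ with exponents $\chi p$ and $(\chi p)'$ to convert the energy estimate into a recursive level-set inequality. Together with Chebyshev's inequality $(h-k) w(A(h))^{1/\chi p} \le \|(u-k)_+\|_{L^{\chi p}(B; w)}$ for $h > k \ge k_0$, this should yield an estimate of the shape
\[
\phi(h) \le \left( \frac{CF_+}{h - k} \right)^{\! \chi p} \phi(k)^{(\chi p - 1)/(p - 1)},
\]
where $\phi(k) := w(A(k))/w(B) \in [0, 1]$ and $C = C(p, C_D, C_P, \lambda)$. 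The exponent $(\chi p - 1)/(p - 1)$ strictly exceeds $1$ because $\chi > 1$, so the classical Stampacchia decay lemma (e.g.\ \cite[Lemma B.2]{MR567696}) forces $\phi(k_0 + d) = 0$ for some $d \le C F_+$, which is exactly the claimed bound $\esssup_\Omega u \le \sup_{\partial \Omega} u + C F_+$. The quantitative second half of the lemma then follows by setting $c_1 := (4C)^{-(p-1)}$.

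The main obstacle is the bookkeeping of the weight factors $w(B)^{1/\chi p - 1/p}$ that appear each time Sobolev or Hölder is invoked. To keep the constant depending only on $p, C_D, C_P, \lambda$ — and in particular independent of the possibly huge quantity $w(\Omega)$ — these factors must be arranged so that, via the algebraic identity $(1 - 1/\chi p)/(p - 1) = 1/\chi p + (1 - 1/\chi)/(p - 1)$, they collect into a single $w(B)$ that is absorbed precisely by the normalization $\phi \in [0,1]$. Locating this cancellation is the only nonroutine aspect; once it is in place, Stampacchia's iteration gives the stated universal constant directly.
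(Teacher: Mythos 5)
Your argument is correct and follows essentially the same route the paper intends: the paper does not write out a proof of this lemma, saying only that it is a combination of the cited references (Stampacchia's decay lemma, the global boundedness estimate in Gilbarg--Trudinger, and the corresponding weighted facts in Heinonen--Kilpel\"{a}inen--Martio), and your De Giorgi--Stampacchia level-set iteration with the Caccioppoli--Sobolev--Chebyshev recursion is precisely what those references supply. One small remark: as stated in the paper the smallness hypothesis $\| f_{+}/w\|_{L^{\infty}(\Omega)} \le c_{1}\diam(\Omega)^{p}$ has the wrong sign in the exponent (it should be $\diam(\Omega)^{-p}$ so that $F_+ \le c_1^{1/(p-1)}$), and your choice $c_1 = (4C)^{-(p-1)}$ correctly implements the intended scaling-invariant version.
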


\begin{lemma}\label{lem:weak_harnack}
Let $u \in H^{1, p}(2B; w)$ be a nonnegative weak supersolution to
$- \divergence \mathcal{A}(x, \nabla u) = f$ in $2B$,
and let $F_{-} =  \left( R^{p} \| f_{-} / w\|_{L^{\infty}(2B)} \right)^{1 / (p - 1)}$.
Then,
(i) for each $0 < s < \chi (p - 1)$, there exists a constant $C$ depending only on $p$, $C_{D}$, $\{ C_{P}, \lambda \}$, $L$ and $s$ such that
\begin{equation}\label{eqn:WHI}
\left( \fint_{B} u^{s} \, d w \right)^{1 / s}
\le
C \left( \essinf_{B} u + F_{-} \right).
\end{equation}
(ii) there exists a constant $C$ depending only on $p$, $C_{D}$, $\{C_{P}, \lambda \}$ and $L$ such that
\begin{equation}\label{eqn:WHI2}
R^{p - 1} \fint_{B} |\nabla u|^{p - 1} \, d w
\le
C \left( \essinf_{B} u + F_{-} \right)^{p - 1}.
\end{equation}
\end{lemma}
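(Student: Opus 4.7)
The plan is to carry out the classical Moser iteration in the weighted elliptic setting of the lemma, with \eqref{eqn:sobolev} serving as the main integral embedding. As a preliminary step I would set $v := u + F_{-}$, which preserves the gradient, keeps $v$ a positive weak super-solution of a closely related equation, and allows the $f_{-}$-term to be systematically absorbed using the identity $F_{-}^{p - 1} = R^{p} \| f_{-} / w \|_{L^{\infty}(2B)}$ together with the pointwise inequality $v \ge F_{-}$ in every Young-type computation below.

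For (i), the central step is a family of Caccioppoli inequalities obtained by testing the super-solution against $\varphi = \eta^{p} v^{\gamma}$ for $\gamma < 0$ and a standard Lipschitz cutoff $\eta$. After invoking \eqref{eqn:coercive}, \eqref{eqn:growth}, Young's inequality, and the $f_{-}$-absorption just described, one arrives at
\[
\int_{2B} \eta^{p} w |\nabla v^{\beta}|^{p} \, dx
\le
C(\beta) \int_{2B} |\nabla \eta|^{p} v^{p \beta} w \, dx,
\qquad
\beta := \frac{\gamma + p - 1}{p}, \quad \beta \neq \frac{p - 1}{p}.
\]
Coupling with \eqref{eqn:sobolev} and iterating for $\beta < 0$ along a geometric sequence of shrinking radii between $R$ and $3R/2$ yields, for any $t > 0$,
\[
\essinf_{B} v
\gtrsim
\left( \fint_{3B/2} v^{-t} \, dw \right)^{-1/t}.
\]

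To bridge negative to positive exponents, I would use the logarithmic test $\varphi = \eta^{p} v^{1 - p}$, which after the analogous absorption gives $\int_{2B} \eta^{p} w |\nabla \log v|^{p} \, dx \le C \int_{2B} |\nabla \eta|^{p} w \, dx$; together with \eqref{eqn:PI}, this bounds the $(p, w)$-BMO norm of $\log v$ on $3B/2$. A John--Nirenberg-type exponential integrability in the doubling-Poincar\'{e} setting (cf.\ \cite[Chapter 3]{MR2305115}) then produces $s_{0} > 0$ with $\fint_{3B/2} v^{s_{0}} \, dw \cdot \fint_{3B/2} v^{-s_{0}} \, dw \le C$, and chaining with the negative-exponent bound gives (i) at the single exponent $s = s_{0}$. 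To extend to all $0 < s < \chi(p - 1)$, I would iterate the same Caccioppoli--Sobolev recursion, now with $\beta \in (0, (p - 1)/p)$: each step improves the exponent by a factor $\chi$, and by choosing $\beta$ close to $(p - 1)/p$ in the final step one attains any $s < \chi(p - 1)$.

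For (ii), I would reuse the Caccioppoli inequality above with $\beta = (p - 1 - \epsilon)/p$ for a small $\epsilon > 0$, yielding $\fint_{B} |\nabla v|^{p} v^{-1 - \epsilon} \, dw \lesssim R^{-p} \fint_{3B/2} v^{p - 1 - \epsilon} \, dw$, and combine this with H\"{o}lder's inequality
\[
\fint_{B} |\nabla v|^{p - 1} \, dw
\le
\left( \fint_{B} |\nabla v|^{p} v^{-1 - \epsilon} \, dw \right)^{(p - 1)/p}
\left( \fint_{B} v^{(1 + \epsilon)(p - 1)} \, dw \right)^{1/p}.
\]
Applying (i) to both $v$-integrals (the exponents $p - 1 - \epsilon$ and $(1 + \epsilon)(p - 1)$ both lie in $(0, \chi(p - 1))$ for $\epsilon > 0$ small, using $\chi > 1$ and $p > 1$) and collecting the exponents on $v$, which sum to $p - 1$, yields (ii). The main obstacle, in my view, is the John--Nirenberg bridge in the weighted metric setting: one has to track the quantitative dependence of the exponential-integrability constant on $p$, $C_{D}$, $C_{P}$, $\lambda$, $L$, which either requires careful reference to the weighted-space literature or a self-contained Bombieri--Giusti-type iteration.
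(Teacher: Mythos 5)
Your argument is correct and follows the standard Moser iteration scheme: Caccioppoli estimates for negative and small positive powers via test functions $\eta^{p} v^{\gamma}$ after the shift $v = u + F_{-}$, the logarithmic test function and a John--Nirenberg argument to cross $s = 0$, iteration on positive exponents up to $\chi(p-1)$, and a Caccioppoli/H\"{o}lder interpolation for the gradient bound (ii). The paper does not write out a proof of this lemma---it explicitly defers to standard references (Gilbarg--Trudinger Thm.\ 8.18, Heinonen--Kilpel\"{a}inen--Martio Thms.\ 3.59 and 7.46, etc.)---and those sources establish the result by essentially the same Moser iteration you describe, so your proposal matches the intended proof.
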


It is well-known that Lemma \ref{lem:weak_harnack} yields a local H\"{o}lder estimate (see, e.g., \cite[Theorem 8.24]{MR1814364}).
Below, we always assume that $f / w$ is locally bounded.
Under this assumption, any weak solution $u$ to $- \divergence \mathcal{A}(x, \nabla u) = f$
can be regard as a locally H\"{o}lder continuous function.

We use the exterior condition \eqref{eqn:CDC} via the following boundary H\"{o}lder estimate.
The proof below is the same as \cite[pp. 206-209]{MR1814364} (see also \cite{MR492836} and \cite{MR757718}).

\begin{lemma}\label{lem:boundary_hoelder_esti}
Let $B$ be a ball centered at $\xi \in \partial \Omega$ with radius $R$. 
Assume that \eqref{eqn:CDC} holds.
Let $u \in H^{1, p}(\Omega; w) \cap L^{\infty}(\Omega)$
satisfy $- \divergence \mathcal{A}(x, \nabla u) = f$ in $\Omega \cap 4B$.
Let $F_{\pm} =  \left( R^{p} \| f_{\pm} / w\|_{L^{\infty}(2B)} \right)^{1 / (p - 1)}$.
Then,
\[
\osc_{\Omega \cap B} u
\le
\left(1 - \frac{\gamma^{1 / (p - 1)}}{C} \right) \osc_{\Omega \cap 4B} u
+
\frac{\gamma^{1 / (p - 1)}}{C} \osc_{\partial \Omega \cap B} u
+
F_{+} + F_{-},
\]
where $C$ is a constant depending only on $p$, $C_{D}$, $\{ C_{P}, \lambda \}$ and $L$ and $\osc := \sup - \inf$.
\end{lemma}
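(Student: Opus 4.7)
The plan is to follow the classical De Giorgi / Moser oscillation-decay argument at the boundary (cf.\ \cite[\S 8.10]{MR1814364}), with \eqref{eqn:sobolev}, the weak Harnack inequality (Lemma \ref{lem:weak_harnack}), and the capacity density condition \eqref{eqn:CDC} replacing their standard Euclidean analogues. I first set up the relevant quantities and apply a dichotomy. Let $M = \esssup_{\Omega \cap 4B} u$, $m = \essinf_{\Omega \cap 4B} u$, $M_{\partial} = \sup_{\partial \Omega \cap B} u$, $m_{\partial} = \inf_{\partial \Omega \cap B} u$, $\omega = M - m$, $\omega_{\partial} = M_{\partial} - m_{\partial}$. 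Since $m \le m_{\partial} \le M_{\partial} \le M$,
\[
(M - M_{\partial}) + (m_{\partial} - m) = \omega - \omega_{\partial},
\]
so at least one summand is $\ge (\omega - \omega_{\partial})/2$. The two cases are symmetric (the substitution $u \mapsto -u$ swaps them while exchanging $F_{+} \leftrightarrow F_{-}$ by \eqref{eqn:homogenity}), so I focus on the case $a := M - M_{\partial} \ge (\omega - \omega_{\partial})/2$.

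Next I would build a nonnegative supersolution that carries boundary information. Set $v := M - u$, which by \eqref{eqn:homogenity} is a nonnegative weak supersolution of $-\divergence \mathcal{A}(x, \nabla v) = -f$ on $\Omega \cap 4B$, and $v \ge a$ in trace sense on $\partial \Omega \cap B$. Truncate to $\bar{v} := \min\{v, a\}$ (still a nonnegative supersolution, e.g.\ by Lemma \ref{lem:glueing} with the constant $a$), and extend $\bar{v} = a$ on $B \setminus \Omega$: since the trace of $\bar{v}$ along $\partial \Omega \cap B$ already equals $a$, the extended function lies in $H^{1, p}_{\loc}(B; w)$, and because $\nabla \bar{v} = 0$ on $\{\bar{v} = a\} \supset B \setminus \Omega$, it remains a weak supersolution of $-\divergence \mathcal{A}(x, \nabla \bar{v}) = g$ in $B$ for some $g$ with $g_{-} \le f_{+} \mathbf{1}_{\Omega}$. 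By \eqref{eqn:CDC},
\[
\capacity_{p, w}(\{\bar{v} = a\} \cap \cl{B}, 2B) \ge \capacity_{p, w}(\cl{B} \setminus \Omega, 2B) \ge \gamma \, \capacity_{p, w}(\cl{B}, 2B).
\]

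The central step is to convert this capacity lower bound into a pointwise lower bound on $\essinf_{B} \bar{v}$. Testing the weak-supersolution inequality for $\bar{v}$ against $\eta^{p} (a - \bar{v})_{+}$ for a standard cutoff $\eta$, I would derive a Caccioppoli estimate for $(a - \bar{v})_{+}$; combining it with the Sobolev inequality \eqref{eqn:sobolev} and the capacity bound above yields an integral lower bound for $\bar{v}$ on $B$, and then Lemma \ref{lem:weak_harnack}(i) applied to $\bar{v}$ on a ball inside $B$ gives
\[
\essinf_{\Omega \cap B} v \;\ge\; \essinf_{B} \bar{v} \;\ge\; c\,\gamma^{1/(p - 1)} a - C F_{+},
\]
with $c, C$ depending only on $p, C_{D}, \{C_{P}, \lambda\}, L$; the exponent $1/(p-1)$ reflects the homogeneity of the $p$-Laplacian when inverting the $L^{s}$ average in Lemma \ref{lem:weak_harnack}(i). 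Since $\essinf_{\Omega \cap B} u \ge m$, rearranging gives
\[
\osc_{\Omega \cap B} u \;\le\; \omega - c\,\gamma^{1/(p - 1)} a + C F_{+} \;\le\; \left(1 - \tfrac{c}{2}\gamma^{1/(p - 1)}\right)\omega + \tfrac{c}{2}\gamma^{1/(p - 1)} \omega_{\partial} + C F_{+}.
\]
The symmetric case (treating $v = u - m$ with $b := m_{\partial} - m$) contributes $F_{-}$ in place of $F_{+}$; combining gives the stated inequality with $C = 2/c$.

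The main obstacle is the capacitary step. Two separate technical points must be handled carefully: first, justifying that the extension of $\bar{v}$ by the constant $a$ across $\partial \Omega \cap B$ really lies in $H^{1, p}_{\loc}$ and preserves the supersolution property (which is the reason for first truncating at $a$); second, running the De Giorgi / Moser iteration on $(a - \bar{v})_{+}$ that transforms a capacity density lower bound on the coincidence set into a pointwise infimum bound on $\bar{v}$. Both are standard in the De Giorgi-Nash-Moser theory, but become bookkeeping-heavy in the weighted nonlinear framework, where the $\gamma^{1/(p-1)}$ exponent is forced by the nonlinearity.
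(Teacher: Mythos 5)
Your overall architecture is the same as the paper's: reduce to a one\-/sided estimate for the nonnegative supersolutions $M-u$ and $u-m$, truncate at the level of the boundary supremum/infimum, extend by that constant across $\partial\Omega\cap B$, use \eqref{eqn:CDC} to produce the factor $\gamma^{1/(p-1)}$, and recombine (your dichotomy and the paper's addition of the two one\-/sided inequalities \eqref{eqn:bdry_esti_04} are interchangeable). The reduction, the symmetry under $u\mapsto -u$ with $F_{+}\leftrightarrow F_{-}$, and the final bookkeeping are all fine.

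The genuine gap is in your ``central step''. As described --- Caccioppoli for $(a-\bar v)_{+}$, plus the Sobolev inequality, plus the capacity lower bound on the coincidence set, yielding an \emph{integral lower bound for $\bar v$ on $B$}, then Lemma \ref{lem:weak_harnack}(i) --- the argument is circular. The Caccioppoli estimate obtained by testing with $\eta^{p}(a-\bar v)_{+}$ only gives $\int_{2B}\eta^{p}|\nabla\bar v|^{p}\,dw\le C\bigl(R^{-p}\int_{2B}(a-\bar v)^{p}\,dw + aF_{+}^{p-1}R^{-p}w(2B)\bigr)\le C a^{p}R^{-p}w(2B)$, and feeding this into any capacitary Poincar\'e/Sobolev inequality for $a-\bar v$ (which vanishes on the large\-/capacity set) returns $\fint_{2B}(a-\bar v)^{p}\,dw\le C\gamma^{-1}a^{p}$ --- weaker than the trivial bound, so no integral lower bound for $\bar v$ follows. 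The step that actually closes the argument is the one in the paper: use $\bar v\,\eta/a$ itself as an \emph{admissible function for the condenser} $(\cl{B}\setminus\Omega,2B)$, so that \eqref{eqn:CDC} gives $a^{p}\gamma\le CR^{p}\fint_{2B}|\nabla(\bar v\eta)|^{p}\,dw$, and then bound this energy from \emph{above} by $Ca\bigl(\essinf_{B}\bar v+F_{+}\bigr)^{p-1}$ using the product rule together with \emph{both} parts of Lemma \ref{lem:weak_harnack}: part (i) for the term $R^{-p}\fint_{2B}\bar v^{p}\,dw\le aR^{-p}\fint_{2B}\bar v^{p-1}\,dw$, and part (ii) for the gradient term $R^{-1}\fint_{2B}|\nabla\bar v|^{p-1}\,dw$ produced by the Caccioppoli estimate. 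The infimum thus enters on the right of the capacity inequality through these upper bounds; there is no intermediate integral lower bound for $\bar v$. Your proposal never invokes Lemma \ref{lem:weak_harnack}(ii), and without it the $\gamma^{1/(p-1)}$\-/quantitative conclusion cannot be reached along the route you describe.
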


\begin{proof}
We first assume that $u$ is a nonnegative supersolution to $- \divergence \mathcal{A}(x, \nabla u) = f$ in $\Omega \cap 4B$.
Let $m = \inf_{\partial \Omega \cap B} u$, and let
\[
u_{m}^{-}(x)
=
\begin{cases}
\min\{ u(x), m \} & x \in \Omega, \\
m & \text{otherwise}.
\end{cases}
\]
Note that $0 \le u_{m}^{-} \le m$ and $u_{m}^{-} = m$ on $\cl{B} \setminus \Omega$.
Take $\eta \in C_{c}^{\infty}(2B)$ such that $0 \le \eta \le 1$ in $2B$, $\eta$ = 1 on $B$ and $|\nabla \eta| \le C / R$.
Then, \eqref{eqn:CDC} gives
\[
m^{p} \gamma
\le
\frac{ m^{p} \capacity_{p, w}( \cl{B} \setminus \Omega, 2B) }{ \capacity_{p, w}( \cl{B}, 2B) }
\le
C R^{p} \fint_{2B} |\nabla (u_{m}^{-} \eta)|^{p} \, d w.
\]
By the product rule, $\nabla (u_{m}^{-} \eta) = \nabla u_{m}^{-} \eta + u_{m}^{-} \nabla \eta$ a.e.
Therefore,
\begin{equation}\label{eqn:bdry_esti_01}
\int_{2B} |\nabla (u_{m}^{-} \eta)|^{p} \, d w
\le
C \left( R^{-p} \int_{2B} (u_{m}^{-})^{p} d \mu + \int_{2B} |\nabla u_{m}^{-}|^{p} \eta^{p} \, d w \right).
\end{equation}
By Lemma \ref{lem:weak_harnack} (i), the former term on the right-hand side is estimated by
\begin{equation}\label{eqn:bdry_esti_02}
\fint_{2B} (u_{m}^{-})^{p} \, d w
\le
m \fint_{2B} (u_{m}^{-})^{p - 1} \, d w
\le
C m \left( \inf_{B} u_{m}^{-} + F_{-} \right)^{p - 1}.
\end{equation}
Consider the test function $(m - u_{m}^{-}) \eta^{p}$.
Since $u_{m}^{-}$ is a supersolution to $- \divergence \mathcal{A}(x, \nabla u) = \min\{f, 0\}$ in $\Omega$, we have
\[
\begin{split}
&
\int_{\Omega} \mathcal{A}(x, \nabla u_{m}^{-}) \cdot \nabla (m - u_{m}^{-}) \eta^{p} \, dx
+
p \int_{\Omega} \mathcal{A}(x, \nabla u_{m}^{-}) \cdot \nabla \eta \eta^{p - 1} (m - u_{m}^{-}) \, dx
\\
& \quad
\ge
\int_{\Omega} (m - u_{m}^{-}) \eta^{p} \frac{\min\{ f, 0 \}}{w} \, d w.
\end{split}
\]
By \eqref{eqn:coercive} and \eqref{eqn:growth}, this inequality yields
\[
\begin{split}
\int_{2B} |\nabla u_{m}^{-}|^{p} \eta^{p} \, d w
& =
\int_{2B} |\nabla (m - u_{m}^{-})|^{p} \eta^{p} \, d w
\\
& \le
C m 
\left(
R^{-1} \int_{2B} |\nabla u_{m}^{-}|^{p - 1} \, d w
+
F_{-}^{p - 1} R^{-p} w(2B)
\right).
\end{split}
\]
By Lemma \ref{lem:weak_harnack} (ii), the former term on the right-hand side is estimated by
\begin{equation}\label{eqn:bdry_esti_03}
R^{-1} \fint_{2B} |\nabla u_{m}^{-}|^{p - 1} \, d w
\le
C R^{-p} \left( \inf_{B} u_{m}^{-} + F_{-} \right)^{p - 1}.
\end{equation}
Combining \eqref{eqn:bdry_esti_01}, \eqref{eqn:bdry_esti_02} and \eqref{eqn:bdry_esti_03}, we obtain
\begin{equation}\label{eqn:boundary_esti}
m
\le
\frac{C}{\gamma^{1 / (p - 1)}}
\left(
\inf_{B} u_{m}^{-} + F_{-}
\right).
\end{equation}

Let $M(R) = \sup_{\Omega \cap B(\xi, R)} u$ and $m(R) = \inf_{\Omega \cap B(\xi, R)} u$.
Applying \eqref{eqn:boundary_esti} to $M(4R) - u$ and $u - m(4R)$, we obtain
\begin{equation}\label{eqn:bdry_esti_04}
M(4R) - \sup_{\partial \Omega \cap B} u 
\le
\frac{C}{\gamma^{1 / (p - 1)}} \left( M(4R) - M(R) + F_{+} \right),
\end{equation}
\[
\inf_{\partial \Omega \cap B} u - m(4R)
\le
\frac{C}{\gamma^{1 / (p - 1)}} \left( m(R) - m(4R) + F_{-} \right).
\]
Adding the two inequalities, we arrive at the desired estimate.
\end{proof}

Using \eqref{eqn:bdry_esti_04} in Lemma \ref{lem:boundary_hoelder_esti} iteratively, we get the following lemma.

\begin{lemma}\label{lem:boundary_hoelder_esti2}
Let $B$ be a ball centered at $\xi \in \partial \Omega$ with radius $R_{0}$. 
Assume that \eqref{eqn:CDC} holds for all $0 < R \le R_{0}$.
Then, there are positive constants $c_{2}$ and $\theta \in (0, 1)$  depending only on $p$, $C_{D}$, $\{C_{P}, \lambda \}$, $L$ and $\gamma$
such that if $u \in H_{0}^{1, p}(\Omega; w)$ is a nonnegative bounded weak subsolution to
$- \divergence \mathcal{A}(x, \nabla u) = f$ in $\Omega \cap B$, and
if $\| f_{+} / w \|_{L^{\infty}(\Omega)} \le c_{2} R_{0}^{-p}  \left( \sup_{\Omega \cap B} u \right)^{p - 1}$, then
\[
\sup_{\Omega \cap \theta B} u
\le
\frac{1}{4} \sup_{\Omega \cap B} u.
\]
\end{lemma}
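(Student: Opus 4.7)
The plan is to iterate the one-step decay \eqref{eqn:bdry_esti_04} over the dyadic radii $R_k := R_0/4^k$. Since $u \in H_0^{1,p}(\Omega; w)$ and $u \ge 0$, we have $\sup_{\partial \Omega \cap B(\xi, R)} u = 0$ in the sense of Lemma \ref{lem:global_boundedness} for every $0 < R \le R_0$. Writing $M(R) := \sup_{\Omega \cap B(\xi, R)} u$ and $F_+(R) := (R^p \|f_+/w\|_{L^\infty(\Omega)})^{1/(p-1)}$, the target one-step inequality is
\[
M(R) \le \beta\, M(4R) + F_+(R) \qquad \text{whenever } 4R \le R_0,
\]
with $\beta := 1 - \gamma^{1/(p-1)}/C \in (0,1)$, where $C$ is the constant from Lemma \ref{lem:boundary_hoelder_esti}, enlarged if necessary so that $\gamma^{1/(p-1)} < C$.

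The derivation of \eqref{eqn:bdry_esti_04} in Lemma \ref{lem:boundary_hoelder_esti} applies \eqref{eqn:boundary_esti} only to the auxiliary function $M(4R) - u$. When $u$ is merely a nonnegative subsolution, the homogeneity \eqref{eqn:homogenity} still makes $M(4R) - u$ a nonnegative supersolution to $- \divergence \mathcal{A}(x, \nabla \cdot) = -f$ in $\Omega \cap 4B(\xi, R)$, with the role of $F_-$ played by $F_+(R)$, so the one-sided bound \eqref{eqn:bdry_esti_04} survives. Combined with the vanishing boundary supremum, this gives the claimed one-step contraction. Iterating $k$ times and using $F_+(R_j) \le F_+(R_0)$ for $j \ge 1$ yields
\[
M(R_k) \le \beta^k M(R_0) + \sum_{j=1}^{k} \beta^{k-j} F_+(R_j) \le \beta^k M(R_0) + \frac{F_+(R_0)}{1-\beta}.
\]

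To conclude, fix $k \in \N$ so large that $\beta^k \le 1/8$ and set $\theta := 4^{-k}$; both depend only on $p$, $C_D$, $\{C_P, \lambda\}$, $L$, $\gamma$. Then set $c_2 := ((1-\beta)/8)^{p-1}$, so that the hypothesis $\|f_+/w\|_{L^\infty(\Omega)} \le c_2 R_0^{-p} M(R_0)^{p-1}$ forces $F_+(R_0)/(1-\beta) \le M(R_0)/8$. Substituting,
\[
\sup_{\Omega \cap \theta B} u = M(\theta R_0) \le \frac{M(R_0)}{8} + \frac{M(R_0)}{8} = \frac{M(R_0)}{4},
\]
as desired. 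The only mild obstacle is the passage from solutions to subsolutions in \eqref{eqn:bdry_esti_04}, which is automatic since only the supersolution property of $M(4R) - u$ is used; the remainder is standard geometric-series iteration, calibrated so that the source term absorbs into one eighth of the supremum.
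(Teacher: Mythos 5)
Your proposal is correct and follows exactly the route the paper intends: the paper's entire proof of this lemma is the remark that one iterates \eqref{eqn:bdry_esti_04}, and your iteration over $R_k = R_0/4^k$, the observation that $\sup_{\partial\Omega\cap B}u = 0$ for a nonnegative $u \in H_0^{1,p}(\Omega;w)$, the geometric-series absorption of the source term, and the choice of $\theta = 4^{-k}$ and $c_2$ fill in precisely the omitted details. Your side remark that only the supersolution property of $M(4R)-u$ (via the homogeneity \eqref{eqn:homogenity}) is needed, so the estimate applies to subsolutions with $F_-$ replaced by $F_+$, is also accurate.
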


\section{Construction of strong barriers}\label{sec:barriers}

\begin{theorem}\label{thm:main}
Let $\Gamma \subset \partial \Omega$ be a closet set, and assume that \eqref{eqn:CDC} holds for all $\xi \in \Gamma$ and $R > 0$.
Then, there exists a nonnegative function $s_{\Gamma} \in H^{1, p}_{\loc}(\Omega; w) \cap C(\Omega)$ satisfying
\begin{equation}\label{eqn:barrier}
- \divergence \mathcal{A}(x, \nabla s_{\Gamma}) \ge c_{H} \frac{s_{\Gamma}(x)^{p - 1}}{\delta_{\Gamma}(x)^{p}} w(x) \quad \text{in} \ \Omega
\end{equation}
and
\begin{equation}\label{eqn:bound_of_barrier}
\delta_{\Gamma}(x)^{\alpha} \le s_{\Gamma}(x) \le 30 \, \delta_{\Gamma}(x)^{\alpha}
\end{equation}
for all $x \in \Omega$,
where, $c_{H}$ and $\alpha > 0$ are constants depending only on $p$, $C_{D}$, $\{ C_{P}, \lambda \}$, $L$ and $\gamma$.
\end{theorem}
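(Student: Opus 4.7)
The barrier $s_\Gamma$ is assembled from scale-localized supersolutions by combining the iterated boundary H\"older estimate (Lemma~\ref{lem:boundary_hoelder_esti2}) with the gluing principle for minima of supersolutions (Lemma~\ref{lem:glueing}). First, iterating Lemma~\ref{lem:boundary_hoelder_esti2} on the nested balls $B(\xi,\theta^k R)$ with $\xi\in\Gamma$ yields
\[
\sup_{\Omega\cap B(\xi,\theta^k R)} u \le 4^{-k}\sup_{\Omega\cap B(\xi,R)} u
\]
for any nonnegative bounded weak $\mathcal{A}$-solution $u$ vanishing on $\Gamma$; this is H\"older decay with the universal exponent $\alpha:=\log 4/\log(1/\theta)$, which is precisely the exponent in \eqref{eqn:bound_of_barrier}.

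Next, for each $\xi\in\Gamma$ and each dyadic scale $R=2^{-k}\in(0,\diam\Omega]$, I construct a local building block $h_{\xi,R}\in H^{1,p}_\loc(\Omega;w)\cap C(\Omega)$ that is constant $\equiv 30R^\alpha$ outside $2B(\xi,R)$, bounded above by $C\delta_\Gamma^\alpha$ inside $B(\xi,R)$ (thanks to the decay above), and is a weak supersolution of \eqref{eqn:barrier} on $\Omega$. The global barrier is then defined as
\[
s_\Gamma(x) := \inf_{(\xi,k)\in\Gamma^\ast\times\Z} h_{\xi,2^{-k}}(x),
\]
where $\Gamma^\ast\subset\Gamma$ is a countable dense subset. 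The upper bound $s_\Gamma(x)\le 30\,\delta_\Gamma(x)^\alpha$ follows by specializing to $\xi$ a nearest point of $\Gamma^\ast$ and $R$ comparable to $\delta_\Gamma(x)$; the lower bound $s_\Gamma\ge\delta_\Gamma^\alpha$ is a calibration statement on the normalizations of the $h_{\xi,R}$, verified via the weak Harnack inequality (Lemma~\ref{lem:weak_harnack}) to propagate interior positivity from the outer constant value $30R^\alpha$. The distributional inequality \eqref{eqn:barrier} is obtained by iterating \eqref{eqn:glueing2} over finite subfamilies of $\{h_{\xi,2^{-k}}\}$ with common reference measure $\sigma=w(x)\,\delta_\Gamma(x)^{-p}\,s_\Gamma(x)^{p-1}\,dx$, followed by a monotone passage to the limit.

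The principal obstacle is the construction of the local block $h_{\xi,R}$ in the second step in such a way that its Riesz measure admits an \emph{absolutely continuous} lower bound of the form $c\,h_{\xi,R}^{p-1}/\delta_\Gamma^p\cdot w$, and not merely a singular measure concentrated on some coincidence set $\{\tilde h_{\xi,R}=30R^\alpha\}$ obtained by truncating an $\mathcal{A}$-harmonic function. A natural realization is to take $h_{\xi,R}$ as the unique bounded weak solution of the absorption-type equation $-\divergence\mathcal{A}(x,\nabla v)=c\,v^{p-1}/\delta_\Gamma^p\cdot w$ on $\Omega\cap 4B(\xi,R)$ with boundary values of order $R^\alpha$, extended by $30R^\alpha$ outside; existence follows from the direct method using \eqref{eqn:coercive}, uniqueness from \eqref{eqn:monotonicity}, and the first step still delivers the upper H\"older bound since the source term is subcritical with respect to the weight. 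This is also the point where the absolute-continuity hypothesis required by \eqref{eqn:glueing2} has to be verified carefully, which is why the choice of $\sigma$ above is dictated by the construction rather than chosen a priori.
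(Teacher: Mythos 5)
Your high-level strategy (covering $\Gamma$ by balls at all dyadic scales, building localized supersolutions, and gluing minima via Lemma~\ref{lem:glueing}) is broadly in the same spirit as the paper, but you miss the paper's central structural trick, and your proposed fix for the ``principal obstacle'' you correctly identify would not go through.

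The paper does \emph{not} make the local building block itself satisfy an absorption-type equation. Instead, Lemma~\ref{lem:auxiliary_func} produces a function $u_{B}$ solving $-\divergence\mathcal{A}(x,\nabla u_{B}) = c_{3}R_{0}^{-p}w$ in $\Omega\cap B$ with a \emph{constant} right-hand side and, crucially, with $u_{B}\ge 1/4$ everywhere — the building block does not decay to zero near $\Gamma$. The absorption form $\sim s^{p-1}/\delta_\Gamma^{p}$ in \eqref{eqn:barrier} is never achieved at the level of a single block; it is synthesized afterwards. The paper covers $E_{k}=\{\delta_\Gamma\le R_k\}$ by balls of radius $R_k=(\theta/2)^k$, sets $v_k=\inf_j u_{B(\xi_j,R_k)}$ (a supersolution with density $c_3R_k^{-p}w$ by Lemma~\ref{lem:glueing}), and then defines $s=\inf_k (3/4)^k\tilde v_k$. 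The pinching $1/4\le v_k\le 5/4$ makes this infimum \emph{locally a finite minimum} of six consecutive terms, so Lemma~\ref{lem:glueing} applies directly; and the geometric factor $(3/4)^k$ together with the $p$-homogeneity \eqref{eqn:homogenity} is exactly what converts the constant RHS densities into a quantity comparable to $s^{p-1}/\delta_\Gamma^{p}$, with $\alpha$ fixed by $3/4=(\theta/2)^{\alpha}$. This is the key idea your plan lacks.

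Your proposed block $h_{\xi,R}$ is defined as a bounded solution of $-\divergence\mathcal{A}(x,\nabla v) = c\,v^{p-1}\delta_\Gamma^{-p}w$. This step is a genuine gap. The associated energy functional $\int|\nabla v|^p\,dw - c\int v^p\delta_\Gamma^{-p}\,dw$ is not coercive unless you already have a Hardy inequality $c\int v^p\delta_\Gamma^{-p}\,dw\le\int|\nabla v|^p\,dw$ with a constant beating $c$; but that inequality (Corollary~\ref{cor:hardy}) is precisely one of the two downstream applications of the barrier you are constructing, so the argument is circular. The direct method does not ``just work'' here, and even if a minimizer exists for some small $c$ one has no quantitative control of the constant of the kind the theorem requires. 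The paper avoids this entirely: each building block solves a benign Dirichlet problem with constant data, for which the Minty–Browder theorem gives existence unconditionally. Separately, your infimum over a dense $\Gamma^{\ast}\times\Z$ is not obviously a locally finite minimum (as required by Lemma~\ref{lem:glueing}); the paper ensures local finiteness precisely through the two-sided bound $1/4\le v_k\le 5/4$ and the $E_k$-cutoff $\tilde v_k$, which is the mechanism you would need to replicate.
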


\begin{lemma}\label{lem:auxiliary_func}
Let $B$ be a ball centered at $\xi \in \partial \Omega$ with radius $R_{0}$. 
Assume that \eqref{eqn:CDC} holds for all $0 < R \le R_{0}$.
Let $c_{3} := \min\{ c_{1}, c_{2} \}$, where $c_{1}$ and $c_{2}$ are constants in
Lemmas \ref{lem:global_boundedness} and \ref{lem:boundary_hoelder_esti2}.
Then, there exists a function $u_{B} \in H^{1, p}_{\loc}(\Omega; w) \cap C(\Omega)$ satisfying
\begin{equation}\label{eqn:auxiliary_func}
- \divergence \mathcal{A}(x, \nabla u_{B}) = c_{3} R_{0}^{-p} w \quad \text{in} \ \Omega \cap B,
\end{equation}
\begin{equation}\label{eqn:auxiliary_func_01}
 \frac{1}{4} \le u_{B} \le \frac{5}{4} \quad \text{in} \ \Omega,
\end{equation}
\begin{equation}\label{eqn:auxiliary_func_02}
u_{B} = 1 \quad \text{on} \ \Omega \setminus B,
\quad
u_{B} \le \frac{1}{2} \quad \text{on} \ \Omega \cap \theta B,
\end{equation}
where $\theta$ is a constant in Lemma \ref{lem:boundary_hoelder_esti2}.
\end{lemma}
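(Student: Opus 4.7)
The plan is to construct $u_B$ as the weak solution of a mixed Dirichlet problem on the local domain $\Omega \cap B$, prescribing boundary value $1$ on the inner piece $\Omega \cap \partial B$ (so it matches a constant extension outside) and boundary value $1/4$ on the exterior piece $\partial \Omega \cap B$ (providing the lower threshold), and then to extend $u_B \equiv 1$ on $\Omega \setminus B$. Concretely, I would fix a Lipschitz function $g$ on $\R^n$ with $g \equiv 1$ on $\Omega \setminus B$, $g \equiv 1/4$ in an open neighborhood of $\partial \Omega \cap B$, and $1/4 \le g \le 1$ everywhere, and let $u_B \in H^{1,p}(\Omega \cap B; w)$ be the unique weak solution of $-\divergence \mathcal{A}(x, \nabla u_B) = c_3 R_0^{-p} w$ in $\Omega \cap B$ with $u_B - g \in H_0^{1,p}(\Omega \cap B; w)$; existence and uniqueness are standard via monotone operator theory. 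Extending by $1$ outside $B$ places $u_B \in H^{1,p}_{\loc}(\Omega; w)$, because $u_B - g$ extends by zero across $\Omega \cap \partial B$. Continuity $u_B \in C(\Omega)$ follows from interior H\"older regularity (a consequence of the weak Harnack estimate Lemma \ref{lem:weak_harnack}), boundary H\"older continuity at $\partial \Omega \cap B$ from Lemma \ref{lem:boundary_hoelder_esti} under \eqref{eqn:CDC}, and trivial matching at $\Omega \cap \partial B$ where both one-sided limits equal $1$.

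With $u_B$ in hand, the pointwise estimates are verified separately. The identity \eqref{eqn:auxiliary_func} and the first assertion in \eqref{eqn:auxiliary_func_02} hold by construction. The lower bound $u_B \ge 1/4$ of \eqref{eqn:auxiliary_func_01} is the weak minimum principle applied to $u_B - 1/4$: it is a weak supersolution of the homogeneous equation on $\Omega \cap B$ with non-negative trace on $\partial(\Omega \cap B)$, hence is non-negative on $\Omega \cap B$; outside $B$ it equals $3/4 \ge 0$. The upper bound $u_B \le 5/4$ follows from Lemma \ref{lem:global_boundedness} applied to $u_B$ on $\Omega \cap B$, whose diameter is at most $2R_0$: the boundary supremum is $1$, the source quotient is $\|c_3 R_0^{-p} w / w\|_\infty = c_3 R_0^{-p}$, and the smallness hypothesis $c_3 R_0^{-p} \le c_1 (2R_0)^{-p}$ can be secured by choosing $c_3 \le 2^{-p} c_1$ (absorbed into the final definition of $c_1$), yielding $\sup u_B \le 1 + 1/4 = 5/4$.

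For the decay $u_B \le 1/2$ on $\Omega \cap \theta B$ in \eqref{eqn:auxiliary_func_02}, I set $\tilde u := u_B - 1/4$ and apply Lemma \ref{lem:boundary_hoelder_esti2}. The function $\tilde u$ satisfies the same PDE on $\Omega \cap B$ (hence in particular is a non-negative bounded subsolution), has zero trace on $\partial \Omega \cap B$, and is bounded by $1$ on $\Omega \cap B$ by the previous step, with $\sup \tilde u \ge 3/4$ attained at $\Omega \cap \partial B$. The smallness condition of Lemma \ref{lem:boundary_hoelder_esti2} becomes $c_3 R_0^{-p} \le c_2 R_0^{-p} (\sup \tilde u)^{p-1}$, which holds whenever $c_3 \le c_2 (3/4)^{p-1}$, again absorbed into $c_2$. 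The conclusion $\sup_{\Omega \cap \theta B} \tilde u \le (1/4) \sup_{\Omega \cap B} \tilde u \le 1/4$ gives $u_B \le 1/2$ on $\Omega \cap \theta B$.

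The main obstacle is this last step: Lemma \ref{lem:boundary_hoelder_esti2} is stated for $u \in H_0^{1,p}(\Omega; w)$, whereas my $\tilde u$ has trace $3/4$ on $\partial \Omega \setminus B$ after extension by $1$. Inspection of the proof shows, however, that the $H_0^{1,p}$ hypothesis is used only to supply the vanishing boundary trace on $\partial \Omega \cap B$ near the point $\xi$ at hand, which $\tilde u$ satisfies by construction; equivalently, one iterates Lemma \ref{lem:boundary_hoelder_esti} at $\xi$ directly, as in the derivation of Lemma \ref{lem:boundary_hoelder_esti2}. A secondary, routine task is bookkeeping the absolute constants $2^p$ and $(3/4)^{p-1}$ against $c_1, c_2$ so that a single universal choice $c_3 = \min\{c_1, c_2\}$ suffices.
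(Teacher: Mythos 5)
Your approach is essentially the paper's: solve a Dirichlet problem on $\Omega \cap B$ with inhomogeneous boundary data (the paper writes this equivalently as a zero-boundary-data problem for the translated operator $A_{B}(x,z) = \mathcal{A}(x, z + \nabla\eta_{B})$), extend by $1$ across $\Omega \cap \partial B$, and extract \eqref{eqn:auxiliary_func_01}--\eqref{eqn:auxiliary_func_02} from Lemmas \ref{lem:global_boundedness}, \ref{lem:boundary_hoelder_esti} and \ref{lem:boundary_hoelder_esti2}. Your observation that the $H_{0}^{1,p}(\Omega;w)$ hypothesis of Lemma \ref{lem:boundary_hoelder_esti2} is only a proxy for vanishing trace on $\partial\Omega$ near $\xi$ is correct and is exactly how the paper itself uses that lemma; the constant bookkeeping ($2^{p}$ from $\diam(\Omega\cap B)\le 2R_{0}$ and $(3/4)^{p-1}$ from the subsolution rescaling) is a real loose end that the paper's bare $c_{3}=\min\{c_{1},c_{2}\}$ also glosses over.

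One point needs repair. The boundary data $g$ you describe cannot exist as a Lipschitz function in general: you require $g \equiv 1/4$ on an $\R^{n}$-open neighborhood of the entire set $\partial\Omega \cap B$ and $g \equiv 1$ on $\Omega \setminus B$, but whenever $\partial\Omega$ meets $\partial B$ these two sets are at distance zero (both have $\partial\Omega\cap\partial B$ in their closures), so no Lipschitz function can separate them by a fixed gap. The paper sidesteps this by demanding $\eta_{B} = 1/4$ only on $\overline{\Omega\cap (B/2)}$, which is at distance $\ge R_{0}/2$ from $\overline{\Omega}\setminus B$, while leaving $\eta_{B}$ free in the annulus $B\setminus (B/2)$. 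The fix is harmless for the rest of your argument, since the iteration of Lemma \ref{lem:boundary_hoelder_esti} at $\xi$ only uses $\osc_{\partial\Omega\cap B(\xi,R)} u$ at scales $R \le R_{0}/4$, and $g = 1/4$ on $\overline{\Omega\cap (B/2)}$ already makes those oscillations vanish. You should also note that the claim $\sup_{\Omega\cap B}\tilde u \ge 3/4$ implicitly assumes $\Omega\cap\partial B\neq\emptyset$ and rests on continuity of $u_{B}$ up to the interface $\Omega\cap\partial B$; if $\sup\tilde u$ is smaller, the desired bound $u_{B}\le 1/2$ on $\theta B$ is either trivial or requires tracking a $(1/4)^{p-1}$ instead of $(3/4)^{p-1}$ in $c_{3}$.
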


\begin{proof}
Existence of the function follows from the standard argument in \cite[p.177]{MR0259693}.
Take $\eta_{B} \in C^{\infty}(\cl{\Omega})$ such that
\[
\eta_{B} = \frac{1}{4} \ \text{on} \ \cl{\Omega \cap B / 2},
\quad
\eta_{B} = 1 \ \text{on} \ \cl{\Omega} \setminus B,
\quad
\frac{1}{4} \le \eta_{B} \le 1 \ \text{in} \ \cl{\Omega}.
\]
Consider the Dirichlet problem
\[
\begin{cases}
\displaystyle
- \divergence A_{B}(x, \nabla v_{B}) = c_{3} R_{0}^{-p} w & \text{in} \ \Omega \cap B,
\\
v_{B} \in H_{0}^{1, p}(\Omega \cap B; w),
\end{cases}
\]
where
\[
A_{B}(x, z) := \mathcal{A}(x, z + \nabla \eta_{B}(x)).
\]
By \eqref{eqn:sobolev}, the right-hand side belongs to the dual of $H_{0}^{1, p}(\Omega \cap B; w)$.
Therefore, by the Minty-Browder theorem, 
there exists a unique solution $u_{B} \in \eta_{B} + H_{0}^{1, p}(\Omega \cap B; w)$ to \eqref{eqn:auxiliary_func}.
By Lemma \ref{lem:weak_harnack}, $u_{B}$ is continuous in $\Omega \cap B$.
Meanwhile, if $x \in \Omega \cap \partial B$, then  $u_{B}$ is continuous at $x$ by Lemma \ref{lem:boundary_hoelder_esti}.
Consequently, $u_{B}$ is continuous in $\Omega$.
By the comparison principle, $u_{B} \ge 1 / 4$ in $\Omega$.
By Lemma \ref{lem:global_boundedness}, $u_{B} \le \sup_{\partial \Omega} u + 1 / 4 \le 5 / 4$ in $\Omega$.
The latter bound in \eqref{eqn:auxiliary_func_02} follows from Lemma \ref{lem:boundary_hoelder_esti2}.
\end{proof}

\begin{proof}[Proof of Theorem \ref{thm:main}]
First, we construct a function $s$ by the following two steps.
(i) For $k \in \Z$, set $R_{k} = (\theta / 2)^{k}$ and $E_{k} = \{ x \in \Omega \colon \delta_{\Gamma}(x) \le R_{k} \}$.
Choosing  $\{ \xi_{j} \}_{j \in J_{k}} \subset \Gamma$, we construct a locally finite covering $\{ B(\xi_{j}, \theta R_{k} ) \}_{j \in J_{k}}$ of $E_{k + 1}$.
Note that
\[
E_{k + 1}
\subset
D_{k}
:=
\Omega \cap \bigcup_{j \in J_{k}} B(\xi_{j}, R_{k})
\subset
E_{k}.
\]
Using Lemma \ref{lem:auxiliary_func},
we define a function $v_{k} \in H^{1, p}_{\loc}(\Omega; w) \cap C(\Omega)$ by
\[
v_{k} = \inf_{j \in J_{k}} u_{B(\xi_{j}, R_{k})}.
\]
By \eqref{eqn:auxiliary_func_01} and \eqref{eqn:auxiliary_func_02}, we have
\begin{equation}\label{eqn:bound_of_vk}
\frac{1}{4} \le v_{k} \le \frac{5}{4} \ \text{in} \ \Omega
\end{equation}
and
\begin{equation}\label{eqn:overlap}
v_{k} = 1 \ \text{on} \ \Omega \setminus D_{k},
\quad
v_{k} \le \frac{1}{2} \ \text{on} \  E_{k + 1}.
\end{equation}
Moreover, by Lemma \ref{lem:glueing}, we have
\begin{equation}\label{eqn:rigidity_eqn}
- \divergence \mathcal{A}(x, \nabla v_{k}) 
\ge
c_{3} R_{k}^{-p} w
\quad \text{in} \ D_{k}.
\end{equation}
(ii) Define a function $s$ on $\Omega$ by
\[
s(x) = \inf_{k \in \Z} \left\{ \left( \frac{3}{4} \right)^{k} \tilde{v}_{k}(x) \right\},
\quad \text{where} \quad
\tilde{v}_{k}(x) 
=
\begin{cases}
v_{k}(x) & \text{if} \ x \in E_{k}, \\
+ \infty & \text{otherwise}.
\end{cases}
\]
By \eqref{eqn:bound_of_vk} and the inequality $\left( 3 / 4 \right)^{6} \le 1 / 5 < \left( 3 / 4 \right)^{5}$,
\[
\left( \frac{3}{4} \right)^{k - 6} v_{k - 6}(x)
\ge
\left( \frac{3}{4} \right)^{k - 6} \frac{1}{4}
\ge
\left( \frac{3}{4} \right)^{k} \frac{5}{4}
\ge
\left( \frac{3}{4} \right)^{k} v_{k}(x)
\]
for any $x \in E_{k}$.
Therefore,
\begin{equation}\label{eqn:bound_of_s}
s(x) = \min\left\{ \left( \frac{3}{4} \right)^{k - 5} v_{k - 5}(x), \cdots, \left( \frac{3}{4} \right)^{k} v_{k}(x) \right\}
\end{equation}
for all $x \in E_{k} \setminus E_{k + 1}$.
In particular, $s \in H^{1, p}_{\loc}(\Omega; w) \cap C(\Omega)$.

Next, we claim that
\begin{equation}\label{eqn:Riesz_mass}
- \divergence \mathcal{A}(x, \nabla s)
\ge
\left( \frac{3}{4} \right)^{k (p - 1)} c_{3} R_{k - 5}^{-p} w
\end{equation}
in an open neighborhood of $E_{k} \setminus E_{k + 1}$.
By Lemma \ref{lem:glueing} and \eqref{eqn:rigidity_eqn}, this inequality holds in $D_{k} \setminus E_{k + 1}$.
Meanwhile, by \eqref{eqn:overlap}, we have
\[
v_{k - 1} \le \frac{1}{2} < \frac{3}{4} v_{k} \quad \text{in} \ E_{k} \setminus D_{k}.
\]
By continuity of $v_{k}$ and $v_{k - 1}$,
we can take an open set $O_{k}$ such that $v_{k - 1} < (3 / 4) v_{k}$ in $O_{k}$.
Note that
\begin{equation*}
s(x) = \min\left\{ \left( \frac{3}{4} \right)^{k - 5} v_{k - 5}(x), \cdots, \left( \frac{3}{4} \right)^{k - 1} v_{k - 1}(x) \right\}
\end{equation*}
for all $x \in O_{k}$.
Since $v_{k - 5}, \dots, v_{k - 1}$ satisfy \eqref{eqn:rigidity_eqn} in $D_{k - 1}$,
\eqref{eqn:Riesz_mass} holds in $O_{k} \cap D_{k  - 1}$ by the same reason as above.
The open set $(D_{k} \setminus E_{k + 1}) \cup (O_{k} \cap D_{k - 1})$ has the desired property.

Finally, we consider pointwise behavior of $s$.
It follows from \eqref{eqn:bound_of_s} that
\begin{equation}\label{eqn:bound_of_s2}
\frac{1}{4} \left( \frac{3}{4} \right)^{k} \le s(x) \le \frac{5}{4} \left( \frac{4}{3} \right)^{5} \left( \frac{3}{4} \right)^{k}
\end{equation}
for all $x \in E_{k} \setminus E_{k + 1}$.
By this inequality and the definition of $R_{k}$,
the right-hand side of \eqref{eqn:Riesz_mass} is estimated from below by
\[
\left( \frac{3}{4} \right)^{k (p - 1)} c_{3} R_{k - 5}^{-p}
\ge
s^{p - 1} \frac{c_{H}}{\delta_{\Gamma}(x)^{p}},
\]
where $c_{H} = \left\{ (5 / 4) \left( 4 / 3 \right)^{5} \right\}^{1 - p}  \left( \theta / 2 \right)^{6p} c_{3}$.
Hence, $s$ satisfies \eqref{eqn:barrier}.
Take $\alpha > 0$ such that $3 / 4 = (\theta / 2)^{\alpha}$.
Then, \eqref{eqn:bound_of_s2} gives
\[
\frac{1}{4} \delta_{\Gamma}(x)^{\alpha}
\le
s(x)
\le
\frac{5}{4} \left( \frac{4}{3} \right)^{6} \delta_{\Gamma}(x)^{\alpha}
\le
\frac{30}{4} \delta_{\Gamma}(x)^{\alpha}
\]
for all $x \in \Omega$.
Thus, the function $s_{\Gamma} := 4 s$ has the desired properties.
\end{proof}

\begin{corollary}\label{cor:hardy}
Assume that $w$ is a $p$-admissible weight on $\R^{n}$.
Let $\Gamma \subset \partial \Omega$ be a nonempty closed set,
and assume that \eqref{eqn:CDC} holds for all $\xi \in \Gamma$ and $R > 0$.
Let $c_{H} = c_{H}(p, C_{D}, \{ C_{P}, \lambda \}, L = 1, \gamma) > 0$ be the constant in Theorem \ref{thm:main}.
Then,
\[
c_{H} \int_{\Omega} \frac{|\varphi|^{p}}{ \delta_{\Gamma}^{p}} \, d w \le \int_{\Omega} |\nabla \varphi|^{p} \, d w
\]
for all $\varphi \in C_{c}^{\infty}(\Omega)$.
\end{corollary}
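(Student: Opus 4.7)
The approach is to combine the strong barrier from Theorem \ref{thm:main} with the Picone inequality. First I would apply Theorem \ref{thm:main} to the specific operator $\mathcal{A}(x, z) = w(x) |z|^{p - 2} z$, which satisfies \eqref{eqn:coercive}--\eqref{eqn:homogenity} with $L = 1$. This yields a function $s := s_\Gamma \in H^{1, p}_{\loc}(\Omega; w) \cap C(\Omega)$ with $\delta_\Gamma^\alpha \le s \le 30 \, \delta_\Gamma^\alpha$ satisfying, in the sense of distributions,
\[
- \divergence (w |\nabla s|^{p - 2} \nabla s) \ge c_H \frac{s^{p - 1}}{\delta_\Gamma^p} w \quad \text{in} \ \Omega.
\]

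Given $\varphi \in C_c^\infty(\Omega)$, I would test this differential inequality against $\psi := |\varphi|^p / s^{p - 1}$. Since $K := \spt \varphi$ is compact in $\Omega$, both $\delta_\Gamma$ and $s$ are bounded below by a positive constant on $K$, so $\psi$ is a nonnegative function in $H^{1, p}(\Omega; w)$ with compact support in $\Omega$. The locally bounded Radon measure $c_H s^{p - 1} \delta_\Gamma^{-p} w$ is then integrable against $\psi$, and a standard mollification argument extends the supersolution inequality from nonnegative $C_c^\infty(\Omega)$ test functions to such $\psi$, giving
\[
\int_\Omega w |\nabla s|^{p - 2} \nabla s \cdot \nabla \psi \, dx
\ge
c_H \int_\Omega \frac{|\varphi|^p}{\delta_\Gamma^p} \, d w.
\]

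To bound the left-hand side from above, I would invoke Picone's inequality \cite{MR1618334} applied pointwise a.e.\ to the pair $(|\varphi|, s)$, which (using $|\nabla |\varphi|| = |\nabla \varphi|$ a.e.) reads
\[
\nabla \!\left( \frac{|\varphi|^p}{s^{p - 1}} \right) \cdot |\nabla s|^{p - 2} \nabla s
\le
|\nabla \varphi|^p \quad \text{a.e.\ in} \ \Omega.
\]
Multiplying by $w$, integrating, and comparing with the previous display yields the claimed Hardy inequality with the same constant $c_H$. The only genuine obstacle is the justification that $\psi$ is an admissible test function against the supersolution inequality \eqref{eqn:barrier}; once this extension of the class of test functions is established from the local boundedness of the ``right-hand side'' on $\spt \varphi$, the rest of the argument is a direct algebraic combination of two pointwise inequalities.
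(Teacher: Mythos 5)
Your proof is correct and takes essentially the same route as the paper: apply Theorem~\ref{thm:main} to the $(p,w)$-Laplacian, test the supersolution inequality against $|\varphi|^p/s_\Gamma^{p-1}$, and close with the Picone inequality (the paper outsources the last step to \cite[Lemma 3.2]{MR4079054}, while you unpack it pointwise, but the content is identical).
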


\begin{proof}
Applying Theorem \ref{thm:main} to $\mathcal{A}(x, z) = w(x) |z|^{p - 2} z$,
we get a nonnegative function $s_{\Gamma} \in H_{\loc}^{1, p}(\Omega; w) \cap C(\Omega)$ satisfying
\[
- \laplacian_{p, w} s_{\Gamma} \ge c_{H} \frac{s_{\Gamma}(x)^{p - 1}}{\delta_{\Gamma}(x)^{p}} w(x) \quad \text{in} \ \Omega.
\]
Take $\varphi \in C_{c}^{\infty}(\Omega)$ such that $\varphi \ge 0$. Then,
\[
c_{H} \int_{\Omega} \frac{\varphi^{p}}{\delta_{\Gamma}^{p}} \, d w
=
c_{H} \int_{\Omega} \frac{\varphi^{p}}{s_{\Gamma}^{p - 1}} \frac{s_{\Gamma}^{p - 1}}{\delta_{\Gamma}^{p}} \, d w
\le
\int_{\Omega} \varphi^{p} \frac{d \nu[s_{\Gamma}]}{ s_{\Gamma}^{p - 1} },
\]
where $\nu[s_{\Gamma}]$ is the Riesz measure of $s_{\Gamma}$ with respect the $(p, w)$-Laplacian.
Applying the Picone inequality (\cite[Theorem 1.1]{MR1618334}) to the right-hand side (for detail, see e.g. \cite[Lemma 3.2]{MR4079054}), we get
\[
\int_{\Omega} \varphi^{p} \frac{d \nu[s_{\Gamma}]}{s_{\Gamma}^{p - 1}}
\le
\int_{\Omega} |\nabla \varphi|^{p} \, d w.
\]
Combining the two inequalities, we obtain the desired inequality.
\end{proof}

\begin{remark}
If $\R^{n} \setminus \Omega$ is uniformly $(p, w)$-fat, then
\[
c_{H} \int_{\Omega} \frac{|\varphi|^{p}}{ \delta^{p}} \, d w \le \int_{\Omega} |\nabla \varphi|^{p} \, d w
\]
for all $\varphi \in C_{c}^{\infty}(\Omega)$.
This inequality was first proved in \cite[Theorem 8.15]{MR1386213} with an additional assumption on $w$.
Later, Keith and Zhong \cite{MR2415381} proved the self-improvement property of $p$-admissible weights, and the assumption was dropped.
We note that our proof is not via the self-improvement property of $(p, w)$-fat sets and does not use of this result.
\end{remark}

\section{Applications to Dirichlet problems}\label{sec:Dirichlet}


\begin{lemma}\label{lem:supersol}
Assume that there exists a function $s_{\Gamma}$ on $\Omega$ satisfying \eqref{eqn:barrier} and \eqref{eqn:bound_of_barrier}.
Let $h \colon (0, \infty) \to (0, \infty)$ be a continuously differentiable nondecreasing concave function such that
\[
g(s)
:=
\int_{0}^{s} h(t) \frac{dt}{t} < \infty
\]
for some $s > 0$.
Then,  $v(x) := g(s_{\Gamma}(x)) \in H^{1, p}_{\loc}(\Omega; w) \cap C(\Omega)$ is a nonnegative weak supersolution to
\[
- \divergence \mathcal{A}(x, \nabla v)
\ge
c_{H} \frac{h(\delta_{\Gamma}(x)^{\alpha})^{p - 1}}{\delta_{\Gamma}(x)^{p}} w(x) \quad \text{in} \ \Omega.
\]
Moreover, $g(\delta_{\Gamma}(x)^{\alpha}) \le v(x) \le g( 30 \, \delta_{\Gamma}(x)^{\alpha})$ for all $x \in \Omega$.
\end{lemma}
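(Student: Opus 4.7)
The plan is to verify the properties of $g$, establish Sobolev regularity and pointwise bounds for $v$ via the chain rule, and then derive the differential inequality through integration by parts exploiting the homogeneity of $\mathcal{A}$ and the concavity of $g$.

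First I would check that $g$ is $C^2$, strictly increasing and concave on $(0, \infty)$. Differentiation gives $g'(s) = h(s)/s > 0$ and $g''(s) = (h'(s)s - h(s))/s^2$. Concavity of $h$ with $h \ge 0$ yields the tangent-line inequality $h(t) \le h(s) + h'(s)(t-s)$; letting $t \to 0^+$ and using $h(0^+) \ge 0$ gives $h'(s) s \le h(s)$, hence $g'' \le 0$. The regularity $v \in H^{1,p}_{\loc}(\Omega; w) \cap C(\Omega)$ then follows from the chain rule for Sobolev functions, since $s_\Gamma$ is bounded between positive constants on every compact $K \Subset \Omega$ by continuity and \eqref{eqn:bound_of_barrier}, and $\nabla v = g'(s_\Gamma) \nabla s_\Gamma$. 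The pointwise bound on $v$ is immediate from monotonicity of $g$ applied to \eqref{eqn:bound_of_barrier}.

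For the differential inequality, the homogeneity \eqref{eqn:homogenity} gives $\mathcal{A}(x, \nabla v) = g'(s_\Gamma)^{p-1} \mathcal{A}(x, \nabla s_\Gamma)$. For nonnegative $\varphi \in C_c^\infty(\Omega)$, I would introduce the test function $\psi := \varphi g'(s_\Gamma)^{p-1}$: this is nonnegative, compactly supported in $\Omega$, and in $H^{1,p}(\Omega; w)$ because $s_\Gamma$ is bounded between positive constants on $\spt \varphi$. The Leibniz rule then yields
\[
\int_\Omega \mathcal{A}(x, \nabla v) \cdot \nabla \varphi \, dx
= \int_\Omega \mathcal{A}(x, \nabla s_\Gamma) \cdot \nabla \psi \, dx
- (p-1) \int_\Omega \varphi \, g'(s_\Gamma)^{p-2} g''(s_\Gamma) \, \mathcal{A}(x, \nabla s_\Gamma) \cdot \nabla s_\Gamma \, dx.
\]
Applying \eqref{eqn:barrier} to the first integral and using the identity $g'(s)\,s = h(s)$ bounds it from below by $c_H \int_\Omega \varphi \, h(s_\Gamma)^{p-1} \delta_\Gamma^{-p} \, dw$; monotonicity of $h$ with $s_\Gamma \ge \delta_\Gamma^\alpha$ then yields the desired lower bound $c_H \int_\Omega \varphi \, h(\delta_\Gamma^\alpha)^{p-1} \delta_\Gamma^{-p} \, dw$. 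The second integral is nonpositive since $g'' \le 0$, $g'(s_\Gamma) > 0$, and $\mathcal{A}(x, \nabla s_\Gamma) \cdot \nabla s_\Gamma \ge w |\nabla s_\Gamma|^p \ge 0$ by \eqref{eqn:coercive}; its sign-reversed contribution only strengthens the inequality.

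The main technical obstacle is justifying $\psi$ as an admissible test function against the weak supersolution $s_\Gamma$: the weak form \eqref{eqn:weak-form} is stated for $C_c^\infty$ test functions, but a standard mollification-and-truncation argument extends it to nonnegative, compactly supported $H^{1,p}_{\loc} \cap L^\infty$ functions, which covers $\psi$. Everything else is bookkeeping that leverages concavity of $g$ and the $(p-1)$-homogeneity of $\mathcal{A}$.
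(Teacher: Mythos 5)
Your proof is correct and follows essentially the same path as the paper's: observe that concavity of $h$ with $h(0^+)\ge 0$ gives $h'(s)s\le h(s)$ and hence $g''\le 0$, use the $(p-1)$-homogeneity of $\mathcal{A}$ to write $\mathcal{A}(x,\nabla g(s_\Gamma))=g'(s_\Gamma)^{p-1}\mathcal{A}(x,\nabla s_\Gamma)$, move the factor $g'(s_\Gamma)^{p-1}$ onto the test function, discard the resulting nonpositive term coming from $g''$, apply \eqref{eqn:barrier} with $\psi=\varphi g'(s_\Gamma)^{p-1}$, and finish with $g'(s)s=h(s)$ plus the monotonicity of $h$ and the lower bound in \eqref{eqn:bound_of_barrier}. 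You spell out a couple of steps the paper leaves terse — the derivation of $h'(s)s\le h(s)$ from the tangent-line inequality, and the admissibility of the non-smooth test function $\psi$ via density — but these are refinements, not a different route.
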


\begin{proof}
For simplicity, we denote $s_{\Gamma}$ by $s$.
Let $' = \frac{d}{dt}$.
By assumption, $h'(t) t \le h(t)$ for all $t \in (0, \infty)$; therefore, $g$ is increasing and concave on $(0, \infty)$.
Fix a nonnegative function $\varphi \in C_{c}^{\infty}(\Omega)$.
Since $g'' \le 0$, we have
\[
\int_{\Omega} \mathcal{A}(x, \nabla s) \cdot \nabla \left( g'(s)^{p - 1} \right) \varphi \, dx \le 0.
\]
By the chain rule, \eqref{eqn:homogenity} and \eqref{eqn:barrier}, we get
\[
\begin{split}
\int_{\Omega} \mathcal{A}(x, \nabla g(s)) \cdot \nabla \varphi \, dx
& =
\int_{\Omega} \mathcal{A}(x, g'(s) \nabla s) \cdot \nabla \varphi \, dx \\
& \ge
\int_{\Omega} \mathcal{A}(x, \nabla s) \cdot \nabla \left( g'(s)^{p - 1} \varphi \right) \, dx \\
& \ge
c_{H} \int_{\Omega} \frac{s^{p - 1}}{\delta_{\Gamma}^{p}} \left( g'(s)^{p - 1} \varphi \right) \, d w.
\end{split}
\]
Since $h$ is nondecreasing, \eqref{eqn:bound_of_barrier} gives
\[
s^{p - 1} g'(s)^{p - 1}
=
h\left( s \right)^{p - 1}
\ge
h( \delta_{\Gamma}^{\alpha} )^{p - 1}.
\]
The latter statement is a consequence of the monotonicity of $g$.
\end{proof}


\begin{proposition}\label{prop:Dirichlet}
Let $\Omega$ be a bounded open set,
and let $\Gamma \subset \partial \Omega$ be a nonempty closed set satisfying \eqref{eqn:CDC} for all $\xi \in \Gamma$ and $R > 0$.
Let $f \in L^{\infty}_{\loc}(\Omega)$, and assume that there exists a function $h$ satisfying the assumption in Lemma \ref{lem:supersol} and
\[
|f(x)| \le c_{H} \frac{h(\delta_{\Gamma}(x)^{\alpha})^{p - 1}}{\delta_{\Gamma}(x)^{p}} w(x)
\]
for almost every $x \in \Omega$, where $c_{H}$ and $\alpha$ are constants in Theorem \ref{thm:main}.
Then, there exists a nonnegative unique weak solution $u \in H_{\loc}^{1, p}(\Omega; w) \cap C(\Omega)$ to \eqref{eqn:p-laplace}
in the sense that (i) the zero extension of $u$ belongs to $H^{1, p}_{\loc}(\R^{n} \setminus \Gamma; w)$,
and (ii) $\lim_{x \to \xi} u(x) = 0$ for all $\xi \in \Gamma$.
\end{proposition}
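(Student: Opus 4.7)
The plan is to realize $u$ as the local limit of Dirichlet solutions on an exhausting sequence of subdomains, using $v := g(s_{\Gamma})$ from Lemma~\ref{lem:supersol} as a two-sided barrier. Fix $\Omega^{(1)} \Subset \Omega^{(2)} \Subset \cdots \Subset \Omega$ with $\bigcup_{k} \Omega^{(k)} = \Omega$. Since $f \in L^{\infty}_{\loc}(\Omega)$, the Minty--Browder theorem (as in \cite[p.~177]{MR0259693}) produces $u_{k} \in H_{0}^{1,p}(\Omega^{(k)};w)$ solving $-\divergence \mathcal{A}(x,\nabla u_{k}) = f$; extend each by zero. From \eqref{eqn:homogenity} with $t = -1$ one has $\mathcal{A}(x,-z) = -\mathcal{A}(x,z)$, so $v$ and $-v$ serve as a super- and subsolution above $|f|$ and below $-|f|$ respectively; the comparison principle then yields $|u_{k}| \le v$ on $\Omega^{(k)}$.

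Combined with Lemma~\ref{lem:weak_harnack} and a Caccioppoli inequality, this uniform barrier furnishes local H\"{o}lder and $H^{1,p}(\cdot;w)$ estimates for $\{ u_{k} \}$ on any $K \Subset \Omega$. A diagonal subsequence converges locally uniformly and weakly in $H^{1,p}_{\loc}(\Omega;w)$ to some $u \in H^{1,p}_{\loc}(\Omega;w) \cap C(\Omega)$, and the standard monotonicity trick (see, e.g., \cite[\S 3]{MR1183665}, \cite{MR1890997}, \cite[Chapter~21]{MR2305115}) upgrades weak convergence of $\nabla u_{k}$ to a.e.\ convergence, enabling passage to the limit in $\int \mathcal{A}(x,\nabla u_{k}) \cdot \nabla \varphi \, dx$. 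The inherited bound $|u(x)| \le v(x) \le g(30 \, \delta_{\Gamma}(x)^{\alpha})$, together with $g(s) \to 0$ as $s \to 0^{+}$, yields (ii); nonnegativity in the case $f \ge 0$ follows from comparison against the trivial subsolution $0$.

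For (i), fix any ball $B$ with $\cl{B} \subset \R^{n} \setminus \Gamma$. Each $\tilde u_{k}$ lies in $H_{0}^{1,p}(\Omega^{(k)};w) \subset H^{1,p}(\R^{n};w)$, and a Caccioppoli inequality on $B$ with cutoff supported in $2B$ controls $\int_{B} |\nabla \tilde u_{k}|^{p} \, dw$ by terms involving $\int_{2B} |\tilde u_{k}|^{p} \, dw$ and $\int_{2B} |\tilde u_{k}| \, |f|/w \, dw$, both of which are uniformly bounded in $k$ since $|\tilde u_{k}| \le v$ on $\Omega$ (and vanishes elsewhere) and $|f|/w$ is bounded on $\cl{2B}$ (positive distance from $\Gamma$). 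The weak $H^{1,p}(B;w)$ limit of $\tilde u_{k}$ must coincide with $\tilde u$, giving (i). Uniqueness follows by a standard comparison argument: for two solutions $u_{1}, u_{2}$, the function $(u_{1} - u_{2} - \epsilon v)_{+}$ vanishes near $\Gamma$ by (ii) and near $\partial \Omega \setminus \Gamma$ by (i), so it is admissible as a test function; monotonicity \eqref{eqn:monotonicity} forces $u_{1} \le u_{2} + \epsilon v$, and letting $\epsilon \to 0^{+}$ together with symmetry gives $u_{1} = u_{2}$. The main obstacle is the last part of step~3: verifying $\tilde u \in H^{1,p}_{\loc}(\R^{n} \setminus \Gamma;w)$ near points of $\partial \Omega \setminus \Gamma$, where no capacity density condition is assumed, so one must rely entirely on the zero extension of the $u_{k}$'s and Caccioppoli-type bounds rather than on any boundary regularity theorem, and must then ensure that the resulting weak limit genuinely is the extension of the interior limit $u$.
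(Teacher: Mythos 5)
Your overall strategy — construct the barrier $v = g(s_\Gamma)$ from Lemma~\ref{lem:supersol}, build a sequence of approximate Dirichlet solutions dominated by $v$, extract a limit via Caccioppoli/H\"older bounds and the Browder--Minty trick, and read off (i) and (ii) from the uniform estimates — matches the paper. The one structural difference is the approximation scheme: you exhaust the domain by $\Omega^{(1)}\Subset\Omega^{(2)}\Subset\cdots$ and solve with the same $f$, whereas the paper keeps the domain $\Omega$ fixed and truncates the right-hand side, using $f\mathbf{1}_{D_k}$ with $D_k=\{x\in\Omega:\dist(x,\Gamma)>1/k\}$. Both are viable, but the paper's choice makes (i) nearly immediate: since each $u_k\in H_0^{1,p}(\Omega;w)$ and hence $u_k\eta\in H_0^{1,p}(\Omega;w)$ for cutoffs $\eta$ supported away from $\Gamma$, the uniform bound transfers directly to the limit. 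Your variant pushes some of this work into the Caccioppoli estimate on balls away from $\Gamma$, and it is worth observing that when $\cl{2B}$ meets $\partial\Omega\setminus\Gamma$ the test function $u_k\eta^p$ is admissible precisely because $u_k\in H_0^{1,p}(\Omega^{(k)};w)$, which you correctly invoke. So this part of the plan is sound.

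The genuine gap is in your uniqueness argument. You claim that $(u_1-u_2-\epsilon v)_+$ ``vanishes near $\Gamma$ by (ii) and near $\partial\Omega\setminus\Gamma$ by (i)'', but neither assertion is justified. Near $\Gamma$, (ii) only gives $u_1(x),u_2(x)\to 0$; since the barrier $v$ also tends to $0$ there, $u_1-u_2-\epsilon v\to 0$ without a definite sign, so the positive part need not vanish near $\Gamma$ (indeed, uniqueness has to hold for arbitrary competitors, for which one cannot assume $|u_1-u_2|=o(v)$). Near $\partial\Omega\setminus\Gamma$, condition (i) is a Sobolev membership statement about the zero extension, not a pointwise decay statement, so it does not imply $(u_1-u_2-\epsilon v)_+=0$ there either. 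Moreover, even granting admissibility, testing with $(u_1-u_2-\epsilon v)_+$ produces $\nabla(u_1-u_2-\epsilon v)_+=(\nabla u_1-\nabla u_2-\epsilon\nabla v)\mathbf{1}_{\{u_1-u_2>\epsilon v\}}$, and the extra $-\epsilon\nabla v$ term prevents you from applying \eqref{eqn:monotonicity} directly to conclude $\nabla u_1=\nabla u_2$. The paper avoids all of this by choosing a \emph{constant} cutoff: it considers the level set $D=\{u_1>u_2+\epsilon\}$, observes that (ii) forces $\dist(\cl D,\Gamma)>0$ (else one gets a sequence approaching $\Gamma$ on which $u_1-u_2>\epsilon$ yet both tend to $0$), then invokes (i) to get $u_1,u_2\in H^{1,p}(D;w)$ with $(u_1-u_2-\epsilon)_+\in H_0^{1,p}(D;w)$, and finally tests with this function so that \eqref{eqn:monotonicity} applies cleanly. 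Replacing $\epsilon v$ by $\epsilon$ (and localizing to $D$ rather than asserting pointwise vanishing near the boundary) would repair your argument.
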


\begin{proof}
Using Theorem \ref{thm:main} and Lemma \ref{lem:supersol},
we get a nonnegative supersolution $v(x) = g(s(x))$ to $- \divergence \mathcal{A}(x, \nabla u) = |f|$ in $\Omega$.
Set $D_{k} = \{ x \in \Omega \colon \dist(x, \Gamma) > 1 / k \}$,
and consider the sequence of weak solutions $\{ v_{k} \}_{k = 1}^{\infty} \subset H_{0}^{1, p}(\Omega; w) \cap C(\Omega)$ to
\[
\begin{cases}
- \divergence \mathcal{A}(x, \nabla v_{k}) = |f| \mathbf{1}_{D_{k}} & \text{in} \ \Omega, \\
v_{k} = 0 & \text{on} \ \partial \Omega.
\end{cases}
\]
Since $\Omega$ is bounded, the right-hand side belongs to the dual of $H_{0}^{1, p}(\Omega; w)$.
By the comparison principle in \cite[Theorem 3.5]{MR4493271},
\begin{equation}\label{eqn:comparison_with_v}
0 \le v_{k}(x)  \le v(x)
\end{equation}
for all $x \in \Omega$.
Also, consider  the sequence of weak solutions $\{ u_{k} \}_{k = 1}^{\infty} \subset H_{0}^{1, p}(\Omega; w) \cap C(\Omega)$ to
\[
\begin{cases}
- \divergence \mathcal{A}(x, \nabla u_{k}) = f \mathbf{1}_{D_{k}}  & \text{in} \ \Omega, \\
u_{k} = 0 & \text{on} \ \partial \Omega.
\end{cases}
\]
By the comparison principle for weak solutions, we have
\[
- v_{k}(x) \le u_{k}(x) \le v_{k}(x)
\]
for all $x \in \Omega$; therefore,
\begin{equation}\label{eqn:bound_by_v}
|u_{k}(x)| \le v(x)
\end{equation}
for all $x \in \Omega$.
Fix $j \ge 1$, and take a nonnegative function $\eta \in C^{\infty}(\R^{n})$ such that $\eta \equiv 1$ on $\cl{D_{j}}$
and $\eta \equiv 0$ on $\Omega \setminus D_{j + 1}$.
Testing the equation of $u_{k}$ with $u_{k} \eta^{p}$, we get
\[
\int_{\Omega} \mathcal{A}(x , \nabla u_{k}) \cdot \nabla u_{k} \eta^{p} \, dx
+
p \int_{\Omega} \mathcal{A}(x , \nabla u_{k}) \cdot \nabla \eta \eta^{p - 1} u_{k} \, dx
= 
\int_{\Omega} f u_{k} \eta^{p} \, dx.
\]
By \eqref{eqn:coercive} and \eqref{eqn:growth}, this inequality yields
\begin{equation}\label{bound_of_nabla_uk}
\int_{D_{j}} |\nabla u_{k}|^{p} \, d w
\le
C \int_{D_{j + 1}} |u_{k}|^{p} \, d w 
+
C \left\| \frac{f}{w} \right\|_{L^{\infty}(D_{j + 1})}^{p / (p - 1)} w(D_{j + 1}).
\end{equation}
By \eqref{eqn:bound_by_v} and assumption on $f$, the right-hand side is bounded with respect to $k$.
Meanwhile, Lemma \ref{lem:weak_harnack} yields a local H\"{o}lder estimate of $u_{k}$.
Therefore, for each $j \ge 1$, there are constants $C_{j}$ and $\alpha_{j} \in (0, 1)$ independent of $k$ such that
\[
\| u_{k} \|_{C^{\alpha_{j}}(\cl{D_{j}})}
+
\| \nabla u_{k} \|_{L^{p}(D_{j}, w)} \le C_{j}.
\]
Take a subsequence of $\{ u_{k} \}_{k = 1}^{\infty}$ and a function $u$ on $\Omega$ such that
$u_{k} \to u$ locally uniformly in $\Omega$ and $\nabla u_{k} \wkto \nabla u$ weakly in $L^{p}(D_{j}; w)$ for all $j$.
Let $\eta$ be the function defined before.
By the product rule, we have
\[
\int_{\Omega} |\nabla (u_{k} \eta)|^{p} \, dw
\le
C \left(
\int_{D_{j + 1}} |\nabla u_{k}|^{p} \eta^{p} \, dw
+
\int_{D_{j + 1}} |u_{k}|^{p} |\nabla \eta|^{p} \, dw
\right).
\]
The left-hand side belong to $H_{0}^{1, p}(\Omega; w)$, and thus, $u \eta \in H_{0}^{1, p}(\Omega; w)$.
This implies that the zero extension of $u$ belongs to $H^{1, p}_{\loc}(\R^{n} \setminus \Gamma; w)$.
Using the test function $u \eta - u_{k} \eta \in H_{0}^{1, p}(\Omega; w)$, we obtain
\begin{equation}\label{eqn:aeconv1}
\begin{split}
\int_{\Omega} \mathcal{A}(x, \nabla u_{k}) \cdot \nabla (u - u_{k}) \eta \, dx
 & =
- \int_{\Omega} \mathcal{A}(x, \nabla u_{k}) \cdot \nabla \eta (u - u_{k}) \, dx
\\
& \quad +
\int_{\Omega} f (u - u_{k}) \eta \, dx.
\end{split}
\end{equation}
Take the limit $k \to \infty$.
The latter term on the right-hand side goes to zero.
Moreover, by \eqref{eqn:growth}, the former term is estimated by
\begin{equation*}
\begin{split}
& 
\left| \int_{\Omega} \mathcal{A}(x, \nabla u_{k}) \cdot \nabla \eta (u - u_{k}) \, dx \right|
\\
& \quad
\le
C \left( \int_{D_{j + 1} }|\nabla u_{j}|^{p} \, dw \right)^{(p - 1)/ p}
\left( \int_{D_{j + 1}} |u - u_{k}|^{p} \, dw \right)^{1 / p}.
\end{split}
\end{equation*}
The right-hand side goes to zero because $u_{k} \to u$ uniformly in $D_{j + 1}$.
Thus, the left-hand side of \eqref{eqn:aeconv1} goes to zero.
Meanwhile, by the weak convergence of $\nabla u_{j}$ in $L^{p}(D_{j + 1}; w)$, we have
\begin{equation}\label{eqn:aeconv2}
\int_{\Omega} \mathcal{A}(x, \nabla u) \cdot \nabla (u - u_{k}) \eta \, dx \to 0.
\end{equation}
Combining \eqref{eqn:aeconv1}, \eqref{eqn:aeconv2} and \eqref{eqn:monotonicity}, we find that
\[
\int_{D_{j}} \mathcal{A}(x, \nabla u) - \mathcal{A}(x, \nabla u_{k}) \cdot \nabla (u - u_{k}) \eta \, dx \to 0.
\]
It follows from \cite[Lemma 3.73]{MR2305115}
that $u$ satisfies $- \divergence \mathcal{A}(x, \nabla u) = f$ in $\Omega$.
Interior regularity of $u$ follows from Lemma \ref{lem:weak_harnack}.
If $\xi \in \Gamma$, then $u$ is continuous at $\xi$ by the upper bound \eqref{eqn:bound_by_v}.

Let $u, v \in H^{1, p}_{\loc}(\Omega; w) \cap C(\Omega)$ be weak solutions to \eqref{eqn:p-laplace}
satisfying the Dirichlet boundary condition in the statement.
Assume that $u(x) > v(x)$ for some $x \in \Omega$.
Then, $D = \{ x \in \Omega \colon u(x) > v(x) + \epsilon \}$ is a nonempty bounded open set for some $\epsilon > 0$.
If $\dist(\cl{ D }, \Gamma) = 0$, then there is a boundary point $\xi \in \cl{D} \cap \Gamma$.
This contradicts to assumption because $(u - v)(\xi) = 0$ and $\inf_{D} (u - v) \ge \epsilon > 0$.
Therefore, $\dist(\cl{ D }, \Gamma) > 0$ and $u, v \in H^{1, p}(D; w)$.
By density,
\[
\int_{D} \left( \mathcal{A}(x, \nabla u) - \mathcal{A}(x, \nabla v) \right) \cdot \nabla \varphi \, dx = 0
\]
for all $\varphi \in H_{0}^{1, p}(D; w)$.
Testing this equation with $\varphi = u - v - \epsilon$ and using \eqref{eqn:monotonicity}, we find that $u = v$ on $D$.
This contradicts to the assumption.
\end{proof}

\begin{theorem}\label{thm:Dirichlet}
Assume that $\Omega$ is a bounded open set and that $\R^{n} \setminus \Omega$ is uniformly $(p, w)$-fat.
Let $f \in L^{\infty}_{\loc}(\Omega)$, and assume that there are constants $K > 0$ and $0 < \beta \le \alpha$ such that
\begin{equation*}\label{eqn:data}
|f(x)| \le K \delta(x)^{\beta(p - 1) - p} \, w(x)
\end{equation*}
for almost every $x \in \Omega$, where $\alpha$ is a positive number in Theorem \ref{thm:main}.
Then, there exists a unique weak solution $u \in H_{\loc}^{1, p}(\Omega; w) \cap C(\cl{\Omega})$ to \eqref{eqn:p-laplace}.
Moreover,
\begin{equation}\label{eqn:bound_of_u}
|u(x)| \le C K^{1 / (p - 1)}  \delta(x)^{\beta}
\end{equation}
for all $x \in \Omega$, where $C = c_{H}^{1 - p} 30^{\beta / \alpha} (\alpha / \beta)$.
\end{theorem}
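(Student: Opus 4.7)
The plan is to apply Proposition \ref{prop:Dirichlet} to the closed set $\Gamma := \partial \Omega$. This is legal because $\R^{n} \setminus \Omega$ being uniformly $(p, w)$-fat means \eqref{eqn:CDC} holds at every $\xi \in \partial \Omega$ and every $R > 0$, and in this setting $\delta_{\Gamma} = \delta$. The only thing to pin down is an auxiliary function $h$ satisfying the hypotheses of Lemma \ref{lem:supersol} together with the pointwise domination $|f(x)| \le c_{H} h(\delta(x)^{\alpha})^{p - 1} \delta(x)^{-p} w(x)$.

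Guided by the growth hypothesis on $f$, the natural candidate is the power function
\[
h(t) := (K / c_{H})^{1 / (p - 1)} \, t^{\beta / \alpha}.
\]
Since $0 < \beta \le \alpha$, the exponent $\beta / \alpha$ lies in $(0, 1]$, so $h$ is $C^{1}$, nondecreasing, and concave on $(0, \infty)$. Its primitive
\[
g(s) = \int_{0}^{s} h(t) \, \frac{dt}{t} = (K / c_{H})^{1 / (p - 1)} \, \frac{\alpha}{\beta} \, s^{\beta / \alpha}
\]
is finite for every $s > 0$ because $\beta > 0$. Substituting $t = \delta(x)^{\alpha}$ gives
\[
c_{H} \, \frac{h(\delta(x)^{\alpha})^{p - 1}}{\delta(x)^{p}} \, w(x) = K \, \delta(x)^{\beta (p - 1) - p} \, w(x) \ge |f(x)|
\]
almost everywhere, so the size condition required by Proposition \ref{prop:Dirichlet} is satisfied.

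Proposition \ref{prop:Dirichlet} then yields a unique weak solution $u \in H^{1, p}_{\loc}(\Omega; w) \cap C(\Omega)$ with $\lim_{x \to \xi} u(x) = 0$ for every $\xi \in \partial \Omega$. The pointwise bound $|u(x)| \le v(x) := g(s_{\Gamma}(x))$ produced in the proof of that proposition, combined with the upper bound $s_{\Gamma}(x) \le 30 \, \delta(x)^{\alpha}$ from Theorem \ref{thm:main} and the monotonicity of $g$, gives
\[
|u(x)| \le g(30 \, \delta(x)^{\alpha}) = (K / c_{H})^{1 / (p - 1)} \, \frac{\alpha}{\beta} \, 30^{\beta / \alpha} \, \delta(x)^{\beta},
\]
which is \eqref{eqn:bound_of_u} after isolating $K^{1 / (p - 1)}$. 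Boundedness of $\Omega$ then converts the pointwise decay into uniform decay at the boundary, so extending $u$ by zero to $\partial \Omega$ places $u$ in $C(\cl{\Omega})$, and uniqueness in this class is inherited from Proposition \ref{prop:Dirichlet}.

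There is no substantive obstacle; the hard work was done in Theorem \ref{thm:main} and Proposition \ref{prop:Dirichlet}, and the present statement is essentially a matter of selecting the correct power profile $h$. The only delicate point is that concavity of $h$ forces $\beta / \alpha \le 1$ while finiteness of $g$ forces $\beta / \alpha > 0$; both constraints are accommodated precisely by the standing hypothesis $0 < \beta \le \alpha$.
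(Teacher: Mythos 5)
Your proof is correct and takes exactly the same route as the paper: apply Proposition~\ref{prop:Dirichlet} with $\Gamma = \partial\Omega$ and $h(t) = (K/c_H)^{1/(p-1)} t^{\beta/\alpha}$, then read the pointwise bound off $v = g(s_\Gamma)$ via \eqref{eqn:bound_by_v} and \eqref{eqn:bound_of_barrier}. Your verification that $0 < \beta/\alpha \le 1$ makes $h$ concave and $g$ finite, and your explicit computation of $g$, are details the paper leaves to the reader but which are genuinely needed; you supply them correctly.

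One discrepancy worth flagging: your calculation produces the constant $C = c_H^{-1/(p-1)}\, 30^{\beta/\alpha}\, (\alpha/\beta)$, since $(K/c_H)^{1/(p-1)} = K^{1/(p-1)} c_H^{-1/(p-1)}$. The theorem as stated writes $C = c_H^{1-p}\, 30^{\beta/\alpha}\, (\alpha/\beta)$, i.e.\ $c_H^{-(p-1)}$ rather than $c_H^{-1/(p-1)}$. These agree only when $p = 2$. Your exponent is the one that actually falls out of the derivation, so this looks like a typo in the paper; your version is the one to trust.
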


\begin{proof}
Applying to Proposition \ref{prop:Dirichlet} to $\Gamma = \partial \Omega$ and $h(t) = (K / c_{H})^{1 / (p - 1)} t^{\beta / \alpha}$,
we obtain the desired unique weak solution $u$.
The upper bound \eqref{eqn:bound_of_u} follows from \eqref{eqn:bound_by_v}.
\end{proof}

\section*{Acknowledgments}
This work was supported by JST CREST (doi:10.13039/501100003382) Grant Number JPMJCR18K3
and JSPS KAKENHI (doi:10.13039/501100001691) Grant Number JP17H01092.


\bibliographystyle{abbrv} 
\bibliography{reference}


\end{document}